\newtheorem{satz}{Theorem}
\newtheorem{proposition}[satz]{Proposition}
\newtheorem{theorem}[satz]{Theorem}
\newtheorem{lemma}[satz]{Lemma}
\newtheorem{remark}[satz]{Remark}
\newtheorem{problem}[satz]{Problem}
\newtheorem{conjecture}[satz]{Conjecture}
\def\Z{\mathbb {Z}}
\def\E{\mathsf{E}}
\def\a{\alpha}
\def\({\big (}
\def\){\big )}
\def\le{\leqslant}
\def\ge{\geqslant}
\def\_phi{\varphi}
\def\eps{\varepsilon}
\def\Gr{{\mathbf G}}
\def\gcd{\mathsf{gcd}}
\title{The Uniformity Conjecture in Additive Combinatorics}
\author[1]{I.~D.~Shkredov}
\author[2]{J.~ Solymosi}
\affil[1]{Steklov Mathematical Institute, Moscow}
\affil[2]{University of British Columbia, Vancouver}
\begin{document}
\maketitle
\begin{abstract}
In this paper we show examples for applications of the Bombieri-Lang conjecture in additive combinatorics, giving bounds on the cardinality of sumsets of squares and higher powers of integers. Using similar methods we give bounds on the sum-product problem for matchings.
\end{abstract}

\section{Introduction}
In this paper we show examples for applications of the Bombieri-Lang conjecture in additive combinatorics. This major conjecture in Diophantine geometry, if true, has far reaching consequences in number theory and other fields of mathematics \cite{HS}. We are going to use a corollary of the conjecture, the Uniformity Conjecture \cite{CHM} for special curves. There are nice applications of the Bombieri-Lang conjecture in combinatorics, for example it would imply the Erd\H{o}s-Ulam conjecture, that there is no everywhere dense subset of the real plane where the distance of any two points is a rational number \cite{Er,SZ,Ta,Sha, ABT}. 

It was also used to understand additive and multiplicative structure of finite sets of integers by Cilleruelo and Granville in \cite{CG} and by Alon, Angel, Benjamini, and Lubetzky in \cite{AA}. In this paper we continue their work improving and extending some of their results.

In Section \ref{Squares2} we are going to bound the size of the sumset of a set of squares. This is a special case of Rudin's conjecture, first stated in his seminal paper {\em "Trigonometric series with gaps"} \cite{Ru}.

Improving earlier results of Bombieri, Granville and Pintz \cite{BGP}, Bombieri and Zannier proved  that the intersection of a set of squares with any arithmetic progression $P$ does not exceed $O(|P|^{3/5+\eps})$ for any $\eps>0$ (see in \cite{BZ}). 
We obtain a conditional result on intersection of squares with generalized arithmetic progressions (all required definitions can be found in Section \ref{sec:definitions}) 
which we believe is unreachable by methods from \cite{BGP}, \cite{BZ}. 
Theorem \ref{t:md_A_intr} is a very particular case of Theorem \ref{t:md_A} below. 

\begin{theorem}
	Let $A$ be a set of squares and  $H=P_1+\dots+P_d$ be a proper generalized arithmetic progression of dimension $d$ such that $|P_j| \sim |H|^{1/d}$. 
	Then 
\begin{equation}\label{f:md_A_intr}
	|A \cap H| 
	    \ll 8^{d} |H|^{\frac{3}{4}- \frac{3}{32d+4}} \,. 
\end{equation}
	In particular, for any arithmetic progression $P$ one has 
\begin{equation}\label{f:AP_A_intr}
	|A\cap P| \ll |P|^{\frac{2}{3}} \,.
\end{equation}
\label{t:md_A_intr}
\end{theorem}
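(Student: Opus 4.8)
The plan is to deduce both bounds from the Uniformity Conjecture applied to the relevant algebraic curves, treating the arithmetic progression case \eqref{f:AP_A_intr} as the one-dimensional instance ($d=1$) and then building the higher-dimensional case by a projection/slicing argument. Since $A$ is a set of squares, each element $a \in A \cap H$ satisfies $a = x^2$ for some integer $x$, while simultaneously $a = c_0 + \sum_{j=1}^d n_j \delta_j$ for integers $0 \le n_j < |P_j|$ coming from the generalized arithmetic progression structure $H = P_1 + \dots + P_d$. First I would fix all but one of the indices $n_j$, say leave $n_1$ free and fix $n_2, \dots, n_d$. Along each such line the condition becomes $x^2 = \ell + n_1 \delta_1$, i.e.\ a quadratic (genus-zero) relation; but to invoke the Uniformity Conjecture nontrivially one needs a curve of genus $\ge 2$, so the real input must come from stacking two or more parametrizations together.

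The key step is to set up a curve whose rational (or integer) points encode \emph{multiple} collinear squares inside $H$. Concretely, I would consider tuples $(x_1, x_2, x_3, \dots)$ of integers whose squares lie in a common low-dimensional subprogression of $H$ and satisfy the induced linear relations among $x_i^2$; eliminating the progression variables leaves a system cutting out a variety whose smooth projective model has large genus once enough squares are forced to be collinear. Because the $|P_j| \sim |H|^{1/d}$ hypothesis makes all side-lengths comparable, a generic one-dimensional section of $H$ is an arithmetic progression of length $\sim |H|^{1/d}$, and the three-squares-in-AP configuration (squares $u^2, v^2, w^2$ with $u^2 + w^2 = 2v^2$, the classical concordant-forms/congruent-number type curve) gives a genus-one or higher object. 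The Uniformity Conjecture then caps the number of such configurations by an absolute constant, which converts into the bound $|A \cap P| \ll |P|^{2/3}$ after a standard counting inequality: if $|A \cap P| = m$, the number of three-term progressions of squares inside $P$ is $\gg m^3 / |P|$ by Cauchy--Schwarz (each such triple is determined by its first and last term, whose difference is an even multiple controlled by $|P|$), yet it is $O(1)$ per curve and the number of relevant curves is $O(|P|)$, forcing $m^3 / |P| \ll |P|$, hence $m \ll |P|^{2/3}$.

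For the multidimensional bound \eqref{f:md_A_intr} I would iterate this over the $\sim |H|^{(d-1)/d}$ parallel lines in each of the $d$ coordinate directions, applying the one-dimensional estimate $\ll |P|^{2/3} = \ll |H|^{2/(3d)}$ on each line and then combining the $d$ directional bounds. The exponent $\tfrac{3}{4} - \tfrac{3}{32d + 4}$ and the constant $8^d$ should emerge from optimizing how one distributes the points of $A \cap H$ among lines that are either ``rich'' (carrying many squares, hence constrained by the Uniformity Conjecture) or ``poor'' (carrying few, counted trivially), a dyadic pigeonholing over the richness levels in all $d$ directions; the $8^d$ factor is exactly the kind of loss one expects from taking unions over $d$ directions and dyadic scales. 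I would then appeal to the already-stated Theorem~\ref{t:md_A}, of which this is declared to be a special case, so the bulk of the work reduces to verifying the stated hypotheses ($H$ proper, $|P_j| \sim |H|^{1/d}$) feed correctly into that more general estimate.

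The hard part will be controlling the number of genus-$\ge 2$ curves that actually arise and ensuring the Uniformity Conjecture is being applied to a uniform family (a single family of curves of bounded genus, parametrized algebraically), since the conjecture bounds rational points by a constant depending only on the genus, not on the particular curve; proving that the configurations of collinear squares genuinely lie on such a bounded-genus family, rather than degenerating to genus-zero or genus-one pieces where the bound fails, is the delicate geometric input. The subsidiary obstacle is the combinatorial optimization that produces the precise exponent $\tfrac{3}{32d+4}$: extracting this clean rational function of $d$ from the dyadic analysis, rather than a messier bound, requires carefully balancing the contribution of rich versus poor lines and is where the constant $8^d$ is paid.
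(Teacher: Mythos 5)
There is a genuine gap, and it sits at the heart of your one-dimensional argument. Three squares in arithmetic progression satisfy $x^2+z^2=2y^2$, which is a smooth conic --- genus $0$, not genus $\ge 2$ --- with infinitely many rational points (e.g.\ $1,25,49$), so the Uniformity Conjecture says nothing about such configurations; to get a curve of genus $\ge 2$ one needs a product of at least five or six distinct linear factors equal to a square, which is exactly how the paper proceeds: in Lemma~\ref{l:incidences} six (resp.\ four) collinear elements of $A$ with slopes $c_1,\dots,c_6$ produce a rational point on $y^2=\prod_{i=1}^{6}(x+c_i)$ (resp.\ $y^2=\prod_{i=2}^{4}(x^2+(c_i-c_1))$ when the dilates are also squares), and only then does $BL_2$ enter. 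Your counting inequality is also false in the other direction: Cauchy--Schwarz does not give $\gg m^3/|P|$ three-term progressions inside a set of $m$ elements of $P$ --- Behrend-type sets of near-full density have essentially no nontrivial $3$-APs --- you have conflated the $3$-AP count $\sum_x r_{S+S}(2x)S(x)$ with the additive energy. So the claimed route to $|A\cap P|\ll|P|^{2/3}$ collapses at both the upper and the lower bound.

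The multidimensional step fails quantitatively even if the one-dimensional bound were granted: summing $\ll|H|^{2/(3d)}$ over the $\sim|H|^{(d-1)/d}$ lines in a coordinate direction yields only $|A\cap H|\ll|H|^{1-\frac{1}{3d}}$, which for every $d\ge 1$ is weaker than the target $|H|^{\frac34-\frac{3}{32d+4}}$ (already at $d=2$: exponent $5/6$ versus roughly $0.706$), and no rich/poor dyadic bookkeeping over lines can recover the difference, since slicing never couples different lines. The paper's proof of Theorem~\ref{t:md_A} is genuinely multidimensional: one multiplies a shrunken copy $Q$ of $H$ by $\lambda I$, where $I$ is the set of squares in $[M]$, so that every $a\in A\cap H$ generates $|Q||I|$ solutions of $a+q\lambda i=h'$ inside a slightly larger GAP; after splitting $I\times H''$ into gcd-classes $\mathcal{L}_l$ (controlled by a M\"obius/Chinese-remainder computation, which is where properness of $H+H$ and the factor $2^d$ enter), the second bound \eqref{f:incidences_res2} of Lemma~\ref{l:incidences} --- applicable precisely because \emph{both} $A$ and $I$ consist of squares --- caps the solution count, and optimizing $M=\lceil(|H|/R_m)^{6/(1+8m)}\rceil$ produces the exponent $\frac34-\frac{3}{32d+4}$ upon taking $m=d$, $R_d=1$ (your final remark, that the stated theorem follows from Theorem~\ref{t:md_A} once $|P_j|\sim|H|^{1/d}\gg|H|^{3/(4d+2)}$ is checked, is correct as a deduction, but it presupposes the very theorem your slicing argument was meant to replace).
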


In Section \ref{Cubes} we bound sumsets of higher powers. In particular, we prove 
\begin{theorem}
    Let $A$ be a set of $k$th powers, $k>3$.
    Then any arithmetic progression of length $N$ contains at most $N^{1/2} \cdot \exp(- O(\log^{1/3} N))$ elements of $A$. 
\end{theorem}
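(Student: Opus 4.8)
The plan is to convert the arithmetic problem into counting rational points on a high–genus curve and to feed this into the Uniformity Conjecture. Write the progression as $P=\{a+bt:0\le t<N\}$ and let $x_1^k<\dots<x_m^k$ be the $k$th powers lying in $P$, say $x_i^k=a+bt_i$ with $t_i\in\{0,\dots,N-1\}$; the task is to bound $m$. The only geometric input I will use is that for each nonzero integer $c$ the projective plane curve $X^k-Y^k=cZ^k$ is smooth of degree $k$, hence of genus $(k-1)(k-2)/2$, which is $\ge 2$ exactly because $k>3$. The Uniformity Conjecture then supplies a constant $B=B(k)$, \emph{independent of $c$}, bounding the number of rational points on every such curve.

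First I would extract the main term $m\ll_k N^{1/2}$. For any two selected powers one has $x_i^k-x_j^k=b(t_i-t_j)$, so a pair with $t_i-t_j=v>0$ yields the rational point $(x_i:x_j:1)$ on $X^k-Y^k=(bv)Z^k$, and distinct pairs yield distinct points. Consequently each value $v\in\{1,\dots,N-1\}$ is represented at most $B$ times as a difference $t_i-t_j$, and summing over the $\binom m2$ pairs gives \[ \binom m2\le (N-1)\,B, \] i.e. $m\ll_k N^{1/2}$. Equivalently the position set $\{t_i\}$ is a $B$-Sidon subset of $[0,N)$, and this Sidon barrier is precisely why $N^{1/2}$ is the natural limit of the bare point–counting argument.

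The delicate point is to gain the sub-polynomial factor $\exp(-O(\log^{1/3}N))$, which cannot come from the difference statistics alone, since genuine $B$-Sidon sets of size $\asymp N^{1/2}$ do exist. Here one must use that the $t_i$ are not arbitrary but parametrise $k$th powers in a fixed class modulo $b$, and I would split on the size of $b$. When $b<N^{k/2-1}$ the trivial count of $k$th powers in the range, $m\le (|a|+bN)^{1/k}\ll (bN)^{1/k}$, already beats $N^{1/2}$ with room to spare. The critical range is $b\ge N^{k/2-1}$, where $(bN)^{1/k}<b$, each residue class modulo $b$ meets the admissible $k$th roots at most once, and $m$ is governed by the number of $y\le (bN)^{1/k}$ solving $y^k\equiv a\pmod b$. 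Combining the per-difference bound $B$ with an equidistribution estimate for these solutions across the $\prod_{p\mid b}\gcd(k,p-1)$ admissible residues — or, alternatively, embedding the configuration into a generalised progression and invoking the higher–dimensional count behind Theorem \ref{t:md_A} — leaves a free parameter $t$ (the number of prime factors of $b$ one exploits, or the dimension of the embedding) together with a contribution of the shape $\alpha t+\beta(\log N)/t^2$ in the exponent, whose minimum over $t\asymp(\log N)^{1/3}$ is of order $(\log N)^{1/3}$. This is the expected source of the cube-root saving.

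I expect the real obstacle to be exactly this last step: turning the soft, uniform $O_k(1)$-per-difference bound into quantitative control of $k$th powers in residue classes modulo a highly composite $b$, uniformly enough that the optimised saving survives across all ranges of $b$ simultaneously. The geometry enters only through the one family $X^k-Y^k=cZ^k$ and is essentially free; all of the quantitative work — and all of the risk of losing the saving — sits in the analytic and combinatorial bookkeeping that upgrades $N^{1/2}$ to $N^{1/2}\exp(-O(\log^{1/3}N))$.
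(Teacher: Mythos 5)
Your first step is sound: the per-difference application of Conjecture \ref{uniform} to the curves $y^k=x^k+c$ (genus $(k-1)(k-2)/2\ge 2$ for $k>3$), giving that the position set $\{t_i\}$ is $B(k)$-Sidon and hence $m\ll_k N^{1/2}$, is exactly the square-root barrier the paper records as ``$\E(A)\ll |A|^2$ and this is optimal.'' But the entire content of the theorem is the factor $\exp(-O(\log^{1/3}N))$, and for that your proposal contains no proof --- as you yourself concede. The split on $b$ does not help: in the critical range you reduce to counting solutions of $y^k\equiv a\pmod b$ with $y\le (bN)^{1/k}$, i.e.\ to equidistribution of $k$th-power residues modulo a possibly highly composite (or large prime) $b$ in a short interval. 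That is a problem of Bombieri--Granville--Pintz type and is precisely what one cannot do here, conditionally or not; the Uniformity Conjecture, used per difference as you use it, only re-derives the Sidon bound. Likewise, ``invoking the higher-dimensional count behind Theorem \ref{t:md_A}'' presupposes both an embedding of the configuration into a generalized arithmetic progression and a $k$th-power analogue of the incidence machinery, neither of which is specified; and the claimed exponent tradeoff $\alpha t+\beta(\log N)/t^2$ is reverse-engineered from the answer rather than derived --- no mechanism in your sketch produces either term.

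The paper's actual route is different and is worth contrasting. It takes $H$ to be the progression (so $|H+H|\le K|H|$ with $K=O(1)$) and applies Lemma \ref{l:mult_2A-2A} (Schoen--Shkredov, resting on Sanders' quasi-polynomial structure theory) to produce $Z$ with $[2^l]\cdot Z\subseteq 2H-2H$ and $|Z|\ge \exp(-Cl^3\log^2K)|H|$; it then counts the $|B||Z||A|$ solutions of $d=a+bz$ with $b$ ranging over the $k$th powers $B\subseteq [2^l]$, $|B|\gg 2^{l/k}$, by a second-moment argument whose off-diagonal terms are the solutions of $y^k=x^k+(z_2-z_1)$ and are bounded by Conjecture \ref{uniform}. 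The upshot is $|A|^2\ll X^2|D|/|B|$, so the saving over $\sqrt{|H|}$ is $|B|^{-1/2}\approx 2^{-l/(2k)}$, and the exponent $1/3$ is dictated by the $l^3$ loss in Lemma \ref{l:mult_2A-2A} (one needs $|Z|^2\gg |B||D|$, forcing $l\ll (\log|H|/\log^2K)^{1/3}$) --- not by any per-modulus analysis of power residues. So while your geometric input (differences of $k$th powers on superelliptic curves) coincides with the paper's, the amplification mechanism --- multiplicative structure inside $2H-2H$ plus a curve-controlled energy count --- is missing from your argument, and without it the theorem is not proved.
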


This is better than what expected to be achievable by the methods in \cite{BGP}. 
In the following sections we are applying the Uniformity Conjecture to sum-product type problems which were introduced by Erd\H{o}s and Szemer\'edi in \cite{ESz}. Improving earlier bounds we show the following:

\begin{theorem}
Given a set of distinct pairs of integers, $M=\{(a_i,b_i) | 1\leq i\leq n\}.$ If $S=\{a_i+b_i\}$
and $P=\{a_i\cdot b_i \}$ for  $ 1\leq i\leq n,$ then  $|P|+|S|=\Omega(n^{3/5}).$    
\end{theorem}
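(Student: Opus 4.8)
The plan is to translate the sum--product condition into a question about perfect squares and then feed high-genus curves into the Uniformity Conjecture. Writing $s=|S|$ and $p=|P|$, observe first that a pair $(a,b)$ is recovered from $(a+b,ab)=(s_0,p_0)$ as the two roots of $t^2-s_0t+p_0$, so the map $(a,b)\mapsto(a+b,ab)$ is at most $2$-to-$1$. Moreover $a,b\in\Z$ forces the discriminant $s_0^2-4p_0$ to be a perfect square. Hence, up to a factor of $2$, $n$ equals the number of ``good'' pairs $(s_0,p_0)\in S\times P$ for which $s_0^2-4p_0$ is a square, and the whole problem reduces to bounding the number of good pairs by $\ll(s+p)^{5/3}$.

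For a fixed $s_0\in S$ let $N_{s_0}$ be the number of $p_0\in P$ that are good with $s_0$, so that $n\le 2\sum_{s_0\in S}N_{s_0}$. The key step is to count triples: for a triple of distinct products $p_0,p_1,p_2\in P$, a common good sum $x=s_0$ produces integers $y_0,y_1,y_2$ with $y_i^2=x^2-4p_i$, i.e. an integral point on the curve
\[
C_{p_0,p_1,p_2}:\quad y_0^2=x^2-4p_0,\ \ y_1^2=x^2-4p_1,\ \ y_2^2=x^2-4p_2 .
\]
This $C$ is the fibre product over the $x$-line of three genus-$0$ double covers, i.e. a $(\Z/2)^3$-cover of $\mathbb{P}^1$ of degree $8$; its six finite branch points $x=\pm2\sqrt{p_i}$ are distinct because the $p_i$ are, while $x=\infty$ is unramified. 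A Riemann--Hurwitz computation then gives $2g-2=8(-2)+6\cdot4=8$, so $g=5\ge2$. Since distinct good sums give distinct $x$-coordinates, the Uniformity Conjecture over $\mathbb{Q}$ bounds the number of such $s_0$ by a constant $B=B(5,\mathbb{Q})$ \emph{independent of the triple}.

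Double counting now finishes the argument. On one hand $\sum_{s_0\in S}\binom{N_{s_0}}{3}$ counts exactly the pairs (sum, good product-triple), and by the curve bound this is at most $B\binom{p}{3}\le Bp^3$. On the other hand, by convexity of $\binom{\,\cdot\,}{3}$ (Jensen), $\sum_{s_0}\binom{N_{s_0}}{3}\gg(\sum_{s_0}N_{s_0})^3/s^2$ provided the average of $N_{s_0}$ is large (if it is bounded then $n\ll s$, far stronger than needed). Combining the two inequalities yields $(\sum_{s_0}N_{s_0})^3\ll B\,s^2p^3$, hence $n\ll s^{2/3}p\le(s+p)^{5/3}$, which rearranges to $|S|+|P|=\Omega(n^{3/5})$.

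The main obstacle is the genus bookkeeping: one must confirm that $C_{p_0,p_1,p_2}$ is geometrically irreducible of genus $\ge2$ for \emph{all} admissible triples — including the degenerate cases where some $p_i$ is zero, negative, or a perfect square (so that $x^2-4p_i$ acquires rational roots or the cover drops degree) — so that the single uniform constant $B$ really applies to the whole infinite family of curves. Everything else (the $2$-to-$1$ reduction and the convexity estimate) is routine. It is also worth noting why the exponent is exactly $3/5$: three products is the smallest number forcing genus $\ge2$, since two products give only a genus-$1$ intersection of two quadrics to which uniformity does not apply, and the $r$-fold version of this count yields $|S|+|P|\gg n^{r/(2r-1)}$, which is optimized at $r=3$.
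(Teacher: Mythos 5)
Your argument is correct and follows essentially the same route as the paper: the discriminant identity $(a+b)^2-4ab=(a-b)^2$, the uniform bound on the number of sums compatible with a fixed triple of distinct products via a genus $\ge 2$ curve, and the convexity step reproduce exactly the paper's ``no $K_{C,3}$'' graph argument with the cubic-energy/H\"older count yielding $n^3 = O(|S|^2|P|^3)$. The only difference is cosmetic: instead of your genus-$5$ fibre product (to which Conjecture \ref{uniform}, stated only for curves $y^k=f(x)$, does not literally apply), the paper multiplies the three equations into the single hyperelliptic curve $y^2=(x^2-4p_1)(x^2-4p_2)(x^2-4p_3)$ of genus $2$ (set $y=y_0y_1y_2$), which also dissolves your degenerate-case worries except for the trivial case $0\in P$, whose good pairs number only $O(|S|)$.
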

Note that there are constructions showing examples when $|P|+|S|=O(n^{4/5+\varepsilon}).$

\section{Definitions} 
\label{sec:definitions} 

    Let $\Gr$ be an abelian group and let $A$ be a subset of $\Gr$.
Denote by $|A|$ cardinality of $A$.
In this paper we use the same letter to denote a set $A\subseteq \Gr$
and its characteristic function $A:\Gr \rightarrow \{0,1\}.$
Given two sets $A,B\subset \Gr$, define  
the {\it sumset} 
of $A$ and $B$ as 
$$A+B:=\{a+b ~:~ a\in{A},\,b\in{B}\}\,.$$
In a similar way we define the {\it difference sets} and {\it higher sumsets}, e.g., $2A-A$ is $A+A-A$. 
For an abelian group $\Gr$
the Pl\"unnecke--Ruzsa inequality (see, e.g., \cite{TV}) takes place
\begin{equation}\label{f:Pl-R} 
	|nA-mA| \le \left( \frac{|A+A|}{|A|} \right)^{n+m} \cdot |A| \,,
\end{equation} 
where $n,m$ are any positive integers. 
We  use representation function notations like  $r_{A+B} (x)$ or $r_{A-B} (x)$ and so on, which counts the number of ways $x \in \Gr$ can be expressed as a sum $a+b$ or  $a-b$ with $a\in A$, $b\in B$, respectively. 
For example, $|A| = r_{A-A} (0)$.

For $k\ge 2$ define the higher moments of convolutions  \cite{SS1} 
\[
	\E_k (A) = \sum_x r^k_{A-A} (x) = \sum_{\a_1, \dots, \a_{k-1}} |A\cap (A+\a_1) \cap \dots  \cap (A+\a_{k-1})|^2 \,,
\]
and, more generally,
\[
	\E_{k,l} (A) = \sum_{\a_1, \dots, \a_{k-1}} |A\cap (A+\a_1) \cap \dots  \cap (A+\a_{k-1})|^l 
	=
\]
\[
	= 
	\sum_{\a_1, \dots, \a_{l-1}} |A\cap (A+\a_1) \cap \dots  \cap (A+\a_{l-1})|^k = \E_{l,k} (A) \,. 
\]
    For $k=2$ we write $\E(A) := \E_2 (A)$. 
    The {\it common energy} of two sets $A,B \subseteq \Gr$ is 
$$
\E (A,B) = |\{ (a_1,a_2,b_1,b_2) \in A^2 \times B^2 ~:~ a_1 + b_1 = a_2 + b_2 \}|
\,.
$$
Thus $\E(A) = \E(A,A) = \sum_x r^2_{A+A} (x) = \sum_x r^2_{A-A} (x)$.

	Given  a set $Z\subset \Z$ denote by $Z_l$ the set $Z_l = \{ z\in Z ~:~ z \equiv 0 \pmod l\}$. 
	If $P_1,\dots, P_d \subset \Z$ be arithmetic progressions, then denote by $Q:=P_1+\dots+P_d$ the {\it generalized arithmetic progression} (GAP) of dimension $d$. A GAP $Q$ is called to be {\it proper} if $|Q| = \prod_{j=1}^d |P_j|$. For properties of generalized arithmetic progressions consult, e.g., \cite{TV}.

All logarithms are to base $2.$ The symbols  $\ll$ and $\gg$ are the usual Vinogradov's symbols, thus $a\ll b$ means $a=O(b)$ 
and $a\gg b$ is $b=O(a)$. 
If the bounds depend on some parameter $M$ {\it polynomially}, then we write $\ll_M$, $\gg_M$.
By $[n]$ denote $\{1,2,\dots, n\}$. 

\section{Preliminaries} 
\label{sec:preliminaries} 


Throughout the paper we assume a corollary of the Bombieri--Lang Conjecture by Caporaso, Harris, and Mazur \cite{CHM} -- called the Uniformity Conjecture -- in its particular form to hyper- and superelliptic curves.

\begin{conjecture}\label{uniform}
For any polynomial $f\in \Z[x]$ and $k\in \mathbb{N},$ $k\geq 2,$ if the equation 
\begin{equation}\label{conj:BL} 
	y^k = f(x) \,
\end{equation}
defines a curve with genus $g\geq 2$, then it has at most $BL_g$ rational solutions where $BL_g$ depends on genus $g$ only.
\end{conjecture}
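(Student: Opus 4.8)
The statement is itself a conjecture, so what I can realistically propose is not an unconditional proof but a derivation of it from the (geometric, a.k.a. weak) Bombieri--Lang conjecture, following Caporaso, Harris, and Mazur \cite{CHM}. The plan is to first fix the precise input: Bombieri--Lang in the form that for every smooth projective variety $X$ of general type over a number field $K$, the set $X(K)$ is \emph{not} Zariski dense. The target is a single bound $BL_g$, depending only on $g\ge 2$ (the ground field being fixed to $\mathbb{Q}$, since $f\in\Z[x]$), valid simultaneously for every curve of the shape $y^k=f(x)$.

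First I would reduce the superelliptic assertion to the general problem of uniformly bounding $|C(\mathbb{Q})|$ over all smooth projective curves $C/\mathbb{Q}$ of a fixed genus $g$. After normalization and resolution at infinity, $y^k=f(x)$ defines a smooth projective curve whose genus is computed by Riemann--Hurwitz from the ramification of the degree-$k$ map $(x,y)\mapsto x$; the hypothesis $g\ge 2$ puts us in the hyperbolic range, where finiteness of $C(\mathbb{Q})$ is already known by Faltings, so only \emph{uniformity} across the family is at stake. It therefore suffices to produce a uniform bound for the whole moduli family and then specialize to the superelliptic locus.

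The heart of the argument is the \emph{fibered power} construction. I would work over a base variety $B$ parametrizing the relevant curves (a suitable finite cover of $\mathcal{M}_g$ carrying a universal curve $X\to B$) and form the $n$-fold fibered power $X^{n}_B = X\times_B \cdots \times_B X$, whose $\mathbb{Q}$-points over a fixed $b\in B(\mathbb{Q})$ are exactly the $n$-tuples of rational points on the fiber $C_b$. The key geometric input of \cite{CHM} is that for $n$ large the variety $X^{n}_B$ (more precisely, an appropriate desingularized correlation variety built from it) is of general type. Granting this, Bombieri--Lang confines $X^{n}_B(\mathbb{Q})$ to a proper closed subvariety $Z\subsetneq X^{n}_B$. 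A Noetherian induction on $\dim B$ then finishes: over a dense open part of $B$ the tuples of \emph{distinct} rational points are trapped in $Z$, which caps their number, while the finitely many lower-dimensional strata where this fails are handled by the inductive hypothesis. This yields a bound on $|C_b(\mathbb{Q})|$ uniform in $b$, and restricting $B$ to the locus of curves $y^k=f(x)$ gives $BL_g$.

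The main obstacle is exactly the general-type assertion for $X^{n}_B$ at large $n$: this is the genuinely hard geometry in \cite{CHM}, resting on positivity of the relative dualizing sheaf $\omega_{X/B}$ (Arakelov--Parshin ampleness, semistable reduction, slope and height inequalities) and on showing these fibered powers dominate varieties of general type. A secondary subtlety is that the induction a priori produces a bound depending on the chosen base $B$ and its stratification; one must verify that, with the field fixed to $\mathbb{Q}$ and the genus fixed, this collapses to dependence on $g$ alone, as claimed. I do not expect to remove the conditionality: the statement has the strength of a uniform Mordell-type bound and lies beyond current unconditional methods, which is precisely why the paper adopts it as a standing hypothesis rather than a theorem.
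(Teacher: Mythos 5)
Your proposal is in line with the paper's treatment: the paper offers no proof of this statement at all, adopting it as a standing hypothesis and citing Caporaso--Harris--Mazur for exactly the conditional derivation from Bombieri--Lang that you outline. Your sketch of the CHM argument (fibered powers of the universal curve being of general type for large $n$, Bombieri--Lang trapping rational points in a proper subvariety, Noetherian induction on the base, then specialization to the superelliptic locus over $\mathbb{Q}$) is an accurate summary of that reference, and you correctly identify that the statement remains conditional.
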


We are going to use the K\H{o}v\'ari--S\'os--Tur\'an theorem  from graph theory \cite{KTS} several times. Often without stating it, following the standard proof of the theorem when the calculation is more sensitive to the degree distribution of the graph. It gives an upper bound on the number of edges in a bipartite graph not containing a complete bipartite subgraph $K_{s,t}.$ In our applications $s$ and $t$ are constant and the number of vertices grows. 

\begin{theorem}[K\H{o}v\'ari--S\'os--Tur\'an Theorem]\label{KST_t}
Given a bipartite graph $G(A,B)$ with two disjoint vertex sets $A$ and $B,$ so that it contains no $K_{s,t},$ $s$ vertices in $A$ and $t$ vertices in $B,$ all connected by an edge. Then the number of edges in $G(A,B)$ is $O(|B||A|^{1-1/t}+|A|).$
\end{theorem}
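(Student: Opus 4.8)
The plan is to prove this by a double-counting argument on the number of stars $K_{1,t}$ whose centre lies in $A$ and whose $t$ leaves lie in $B$. Write $e$ for the number of edges of $G(A,B)$ and, for $a\in A$, let $d(a)=|N(a)|$ denote the number of neighbours of $a$ in $B$, so that $\sum_{a\in A} d(a)=e$. I would count the quantity
\[
	\Sigma := |\{ (a,T) ~:~ a\in A,\ T\sbeq B,\ |T|=t,\ T\sbeq N(a) \}| = \sum_{a\in A} \binom{d(a)}{t} \,,
\]
once by summing over the centre $a$ and once by summing over the leaf-set $T$.

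For the upper bound, fix a $t$-subset $T\sbeq B$ and observe that the number of $a\in A$ with $T\sbeq N(a)$ is exactly the number of common neighbours of $T$ inside $A$. Were this number at least $s$, those $s$ vertices of $A$ together with $T$ would form a forbidden $K_{s,t}$; hence every $T$ is counted at most $s-1$ times, and therefore
\[
	\Sigma \le (s-1)\binom{|B|}{t} \le \frac{s-1}{t!}\,|B|^t \,.
\]
The combinatorial heart of the argument—that no $t$-set in $B$ has $s$ common neighbours—is thus immediate from the $K_{s,t}$-freeness.

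For the lower bound I would use convexity: the map $x\mapsto \binom{x}{t}$ (extended to the reals as $\frac{1}{t!}\prod_{i=0}^{t-1}(x-i)$) is convex for $x\ge t-1$, so Jensen's inequality applied to the degrees $d(a)$ bounds $\Sigma$ from below in terms of the average degree $e/|A|$. The one technical point, and the main obstacle, is the contribution of low-degree vertices, where $\binom{d(a)}{t}$ vanishes and the naive convexity estimate is not available. I would split $A$ according to whether $d(a)<t$ or $d(a)\ge t$: the low-degree vertices carry at most $(t-1)|A|=O(|A|)$ edges in total, which is absorbed into the $O(|A|)$ term of the statement, while on the high-degree part convexity (using $\binom{d}{t}\ge (d-t+1)^t/t!$ together with the power-mean inequality) yields $\Sigma \gg |A|\,(e/|A|)^t$ up to a constant depending on $t$.

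Combining the two bounds gives, in the regime $e\gg |A|$, the inequality $|A|^{1-t}e^{t} \ll (s-1)|B|^t$, and solving for $e$ produces $e \ll (s-1)^{1/t}\,|A|^{1-1/t}|B|$. Adding back the $O(|A|)$ contribution of the low-degree vertices yields the claimed bound $O(|B||A|^{1-1/t}+|A|)$, with the implied constant depending only on $s$ and $t$. The only real work lies in the bookkeeping around small degrees and the elementary asymptotic $\binom{n}{t}\sim n^t/t!$; everything else is forced by the extremal hypothesis.
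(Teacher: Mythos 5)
Your double-counting of stars $(a,T)$ with centre $a\in A$ and leaf-set $T\subseteq B$, bounded above via $K_{s,t}$-freeness and below via convexity of $\binom{x}{t}$ with the low-degree vertices absorbed into the $O(|A|)$ term, is correct and is precisely the standard proof of the K\H{o}v\'ari--S\'os--Tur\'an theorem. The paper itself gives no proof, citing \cite{KTS} and explicitly deferring to ``the standard proof,'' so your argument matches the paper's intended route, including the correct orientation (leaves in $B$) needed for the stated bound $O(|B||A|^{1-1/t}+|A|)$.
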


The first result on properties of squares which follows from \eqref{conj:BL} is, basically, \cite[Theorem 2]{CG} 
and we give the proof in our terms for the sake of completeness.

\begin{lemma}
	Let $A$ be a set of squares and let $\a,\beta,\gamma$ be different non--zero integers.
	Then 
\begin{equation}\label{f:C4}
	|A\cap (A+\a) \cap (A+\beta) \cap (A+\gamma)| \le BL_2 \,.
\end{equation}
	In particular, $\E_4 (A) \le (BL_2 + 6) |A|^4$, 
	$\E(A,S) \ll |A|^{2/3} |S|^2 + |A| |S|$, and $|A\pm A|\gg |A|^{4/3}$. 
\label{l:C4_C5}
\end{lemma}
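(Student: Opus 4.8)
The plan is to establish the four assertions in order, the first being the heart of the matter and the rest following by counting arguments. For inequality \eqref{f:C4}, I would read the condition $x\in A\cap(A+\a)\cap(A+\beta)\cap(A+\gamma)$ as the assertion that the four integers $x,\,x-\a,\,x-\beta,\,x-\gamma$ are all perfect squares. Putting $x=s^2$ and multiplying the last three squares together, each such $x$ yields an integral (hence rational) point on the curve
\[
	C:\qquad y^2=(s^2-\a)(s^2-\beta)(s^2-\gamma),
\]
because a product of three squares is again a square. Since $\a,\beta,\gamma$ are distinct and nonzero, the sextic on the right has the six distinct roots $\pm\sqrt{\a},\pm\sqrt{\beta},\pm\sqrt{\gamma}$, so $C$ is a hyperelliptic curve of genus $2$; distinct $x$ produce distinct values $s=\sqrt{x}\ge 0$ and hence distinct points of $C$. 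Conjecture \ref{uniform} with $k=2$ then bounds the number of these points by $BL_2$, which is exactly \eqref{f:C4}. I expect this to be the main obstacle: one must exhibit a single curve of genus \emph{exactly} $2$, and the naive quartic $y^2=x(x-\a)(x-\beta)(x-\gamma)$ only has genus $1$, to which the conjecture says nothing; the substitution $x=s^2$ is precisely the device that raises the genus into the required range.

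For the bound on $\E_4(A)$ I would use the representation
\[
	\E_4(A)=\sum_{\a_1,\a_2,\a_3}f(\a_1,\a_2,\a_3)^2,\qquad f(\a_1,\a_2,\a_3):=|A\cap(A+\a_1)\cap(A+\a_2)\cap(A+\a_3)|,
\]
together with the identity $\sum_{\a_1,\a_2,\a_3}f(\a_1,\a_2,\a_3)=|A|^4$. On the generic range, where $0,\a_1,\a_2,\a_3$ are pairwise distinct, \eqref{f:C4} gives $f\le BL_2$, so $f^2\le BL_2 f$ and this range contributes at most $BL_2|A|^4$. Every remaining triple satisfies at least one of the six coincidences $\a_i=0$ or $\a_i=\a_j$; each such pattern collapses $f$ to a double-shift intersection and contributes exactly $\E_3(A)\le|A|^4$, so by a union bound the degenerate range contributes at most $6|A|^4$. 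Summing the two ranges gives $\E_4(A)\le(BL_2+6)|A|^4$.

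For $\E(A,S)$ I would argue graph-theoretically, in the spirit of the K\H{o}v\'ari--S\'os--Tur\'an proof (Theorem \ref{KST_t}). Form the bipartite graph on vertex classes $S$ and $V:=A+S$ with $s\sim v$ iff $v-s\in A$; then $\deg(v)=r_{A+S}(v)$, so $\sum_v\deg(v)=|A||S|$ and $\E(A,S)=\sum_v\deg(v)^2$. For any four distinct $s_1,s_2,s_3,s_4\in S$ the common neighbourhood equals $\bigcap_i(A+s_i)$, which after translating by $-s_1$ becomes $A\cap(A+(s_2-s_1))\cap(A+(s_3-s_1))\cap(A+(s_4-s_1))$ and so has size at most $BL_2$ by \eqref{f:C4}. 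Counting four-stars therefore yields $\sum_v\binom{\deg(v)}{4}\le BL_2\binom{|S|}{4}\ll|S|^4$, whence $\sum_v\deg(v)^4\ll|S|^4$ away from the bounded-degree vertices. Hölder's inequality $\sum_v\deg(v)^2\le(\sum_v\deg(v))^{2/3}(\sum_v\deg(v)^4)^{1/3}$ now produces the main term $(|A||S|)^{2/3}(|S|^4)^{1/3}=|A|^{2/3}|S|^2$, while the vertices of degree at most $4$ contribute $O(|A||S|)$; together this gives the claimed estimate.

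Finally, for $|A\pm A|\gg|A|^{4/3}$ I would first deduce $\E(A)\ll|A|^{8/3}$ from the previous part by Hölder, $\E(A)=\sum_x r_{A-A}(x)^2\le(\sum_x r_{A-A}(x))^{2/3}\E_4(A)^{1/3}=(|A|^2)^{2/3}\E_4(A)^{1/3}$, and then apply Cauchy--Schwarz in the form $|A|^4=(\sum_x r_{A\pm A}(x))^2\le|A\pm A|\cdot\E(A)$, which rearranges to $|A\pm A|\ge|A|^4/\E(A)\gg|A|^{4/3}$.
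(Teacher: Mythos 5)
Your proposal is correct and takes essentially the same route as the paper: the substitution $x=s^2$ yielding the genus-$2$ curve $y^2=(s^2-\alpha)(s^2-\beta)(s^2-\gamma)$, the split of $\E_4(A)$ into generic triples (where $f^2\le BL_2\, f$) and the six degenerate patterns each contributing at most $\E_3(A)\le |A|^4$, and a H\"older reduction of $\E(A,S)$ to the fourth moment of $r_{A+S}$. The only cosmetic divergence is how that fourth moment is bounded --- you count $4$-stars in K\H{o}v\'ari--S\'os--Tur\'an style via $\sum_v \binom{\deg v}{4}\le BL_2\binom{|S|}{4}$, whereas the paper runs the bootstrap $\sigma \le BL_2|S|^4 + 4\sigma/5$ over the terms with $r_{A+S}(x)\ge 5$; both devices dispose of the non-distinct tuples in the same way, and your explicit Cauchy--Schwarz finish $|A|^4\le |A\pm A|\,\E(A)$ with $\E(A)\ll |A|^{8/3}$ is precisely the standard deduction the paper leaves implicit.
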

\begin{proof}
	Indeed, for any $a\in A\cap (A+\a) \cap (A+\beta) \cap (A+\gamma)$ we find $a_1,a_2, a_3 \in A$ such that 
	$\a = a_1-a$, $\beta = a_2-a$, $\gamma = a_3-a$ and hence for $a=x^2$ we see that 
	$y^2 = (x^2+\a)(x^2+\beta)(x^2+\gamma)$ has at most $BL_2$ solutions by Conjecture \ref{uniform}. 
	It gives us \eqref{f:C4} and to obtain the required bound for $\E_4 (A)$ just notice that 
\[
	\E_4 (A) = \sum_x r^4_{A-A} (x) = \sum_{\a,\beta,\gamma} |A\cap (A+\a) \cap (A+\beta) \cap (A+\gamma)|^2 
	\le
\]
\[
	\le
		BL_2 |A|^4 + 6 \sum_{\a,\beta} |A\cap (A+\a) \cap (A+\beta) |^2 
		= BL_2 |A|^4 + 6 \E_3(A) \le (BL_2 + 6) |A|^4 \,.
\]
	In a similar way, to obtain the bound $\E(A,S) \ll |A|^{2/3} |S|^2 + |A| |S|$ we use the H\"older inequality 
\[
	\E(A,S)^3 = \left( \sum_x r^2_{A+S} (x) \right)^3 \le  (|A||S|)^2 \sum_x r^4_{A+S} (x) 
	\le
\]
\[
	\le (|A||S|)^2 \left( \sum_{x ~:~ r_{A+S} (x) \ge 5} r^4_{A+S} (x) + 64 |A| |S| \right) \,. 
\]
	It remains to estimate $\sigma := \sum_{x ~:~ r_{A+S} (x) \ge 5} r^4_{A+S} (x)$.
	We have 
\[
	\sigma 
		\le 
			\sum_{s_1,\dots,s_4} S(s_1) \dots S(s_4) \sum_{x ~:~ r_{A+S} (x) \ge 5}  A(x-s_1) \dots  A(x-s_4)
		\le
			BL_2 |S|^4 + 
\]
\[
			+4\sum_{x ~:~ r_{A+S} (x) \ge 5} r^3_{A+S} (x) 
	\le 
	BL_2 |S|^4 + 4\sigma/5 
\]
	and hence $\sigma \ll |S|^4$. 
$\hfill\Box$
\end{proof}

\begin{remark}
	In a similar way one can estimate the sum $$\sum_x A(l_1(x)) A(l_2(x)) A(l_3(x)) A(l_4(x)) \le BL_2,$$ where $l_j$ are non--proportional linear forms and $A$ is a set of squares.
	In particular, for any non--zero shift $x \in \Z$ of $A$ we have $|(A+x)(A+x)| \gg |A|^{4/3}$.  
\end{remark}

\bigskip

Now we are ready to obtain a result on the number of incidences of points and lines in $A$.
In our regime it 
works  better than the famous Szemer\'edi--Trotter Theorem \cite{sz-t}.

Given a finite set $\mathcal{L} \subset \Z^2$ put 
$\tau(\mathcal{L}) = \max_\lambda |\{ \lambda = y/x ~:~ (x,y) \in \mathcal{L},\, x\neq 0 \}|$.

\begin{lemma}
	Let $A$ be a set of squares and  $B, C, D \subset \Z$, $\mathcal{L} 
	\subset \Z^2$ be any finite sets, $B$ does not contain zero. 
	Then the number of the solutions to the equation 
\begin{equation}\label{f:incidences}
	a = bc+d \,, \quad \quad a\in A,\,~ c\in C,\,~ (b,d) \in \mathcal{L}   
\end{equation}
	is at most 
\begin{equation}\label{f:incidences_res}
	O(\tau^{1/6} (\mathcal{L}) |C| |\mathcal{L}|^{5/6} + |\mathcal{L}| ) \,. 
\end{equation}
	If $B$ is another set of squares, then  the number of the solutions to equation \eqref{f:incidences} is 
\begin{equation}\label{f:incidences_res2}
	O(\tau^{1/4} (\mathcal{L}) |C| |\mathcal{L}|^{3/4} + |\mathcal{L}| ) \,.
\end{equation}
\label{l:incidences}
\end{lemma}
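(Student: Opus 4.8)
The plan is to read the equation $a = bc + d$ as an incidence count and feed it into a degree-sensitive K\H{o}v\'ari--S\'os--Tur\'an argument, where the role of the squares is to make large complete bipartite subgraphs impossible. Concretely, I would form the bipartite graph $G$ on vertex classes $C$ and $\mathcal{L}$, joining $c\in C$ to $(b,d)\in\mathcal{L}$ exactly when $bc+d\in A$; the number of solutions to \eqref{f:incidences} is then the number of edges of $G$. The target bounds \eqref{f:incidences_res} and \eqref{f:incidences_res2} have the shape $\kappa^{1/t}|C|\,|\mathcal{L}|^{1-1/t}+|\mathcal{L}|$ with $(t,\kappa)=(6,\tau(\mathcal{L}))$ in general and $(t,\kappa)=(4,\tau(\mathcal{L}))$ when $B$ is a set of squares, so the whole proof reduces to the combinatorial input that every $t$-element subset of $C$ has at most $O(\tau(\mathcal{L}))$ common neighbours in $\mathcal{L}$. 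I would then run the standard proof of Theorem \ref{KST_t} (convexity of $\binom{\deg}{t}$) while keeping the dependence on this common-neighbour bound explicit, since here it grows with $\tau(\mathcal{L})$ rather than being a fixed constant.

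The heart of the matter is therefore the common-neighbour bound. Fix distinct $c_1,\dots,c_6\in C$; a common neighbour $(b,d)$ forces $bc_i+d$ to be a square for every $i$. Writing $bc_1+d=u^2$ and $bc_2+d=v^2$ solves $(b,d)$ linearly in $(u^2,v^2)$, and substituting into the remaining four conditions turns each $bc_i+d$ for $i=3,4,5,6$ into a binary quadratic form in $(u,v)$ that must be a square; setting $s=v/u$ and multiplying the four conditions together produces a hyperelliptic curve $z^2=\prod_{i=3}^{6}(\alpha_i+\beta_i s^2)$ of genus $\ge 2$, whose rational points number at most $BL_g$ by Conjecture \ref{uniform}. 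The key observation is that for each admissible value of $s$ the associated pairs $(b,d)$ are all proportional — they equal $u^2$ times a fixed vector depending only on $s$ — so at most $\tau(\mathcal{L})$ of them lie in $\mathcal{L}$; multiplying by the $O(1)$ admissible values of $s$ gives at most $O(\tau(\mathcal{L}))$ common neighbours, as required. For the sharper estimate \eqref{f:incidences_res2} I would use that $b\in B$ is itself a square, say $b=p^2$: then after a single substitution $bc_1+d=u^2$ the forms $bc_i+d=u^2+(c_i-c_1)p^2$ are already quadratic in $(u,p)$, so only three further square conditions (hence four values $c_1,\dots,c_4$) are needed to reach a genus-$\ge 2$ curve, which is exactly the improvement of $t$ from $6$ to $4$.

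The main obstacle I expect is the genus bookkeeping together with the degenerate configurations, i.e.\ making the two displayed steps uniform. One must check that the product polynomial really has enough distinct roots to force genus $\ge 2$ — this is why four quadratic factors, rather than the naively sufficient three, are used in the general case, so as to survive possible coincidences among the ratios $\alpha_i/\beta_i$ — and that the curve can be presented in the superelliptic shape $y^k=f(x)$ demanded by Conjecture \ref{uniform}. Separately, the parametrisation breaks down precisely on the degenerate loci ($u=0$, or points of $\mathcal{L}$ lying on one of the lines $d=-c_i b$ through the origin, on which some $bc_i+d$ vanishes), and these must be estimated by hand; reassuringly, each such locus is a single line through the origin and so again contributes at most $\tau(\mathcal{L})$ points, which is exactly why $\tau(\mathcal{L})$ is the right quantity to control both the generic and the degenerate contributions simultaneously.
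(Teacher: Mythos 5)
Your proposal is correct, and for the second bound \eqref{f:incidences_res2} it arrives at exactly the paper's curve $y^2=\prod_{i=2}^{4}\bigl(x^2+(c_i-c_1)\bigr)$ with $x=u/p$ fixing the ratio $d/b=x^2-c_1$; but it differs from the paper's proof in two genuine respects. The paper does not set up the bipartite graph and invoke Theorem \ref{KST_t} with a common-neighbour bound; it runs the equivalent computation via H\"older, $\sigma^6\le|\mathcal{L}|^5\sum_{(b,d)\in\mathcal{L}}\bigl(\sum_{c\in C}A(bc+d)\bigr)^6$, and must then separately kill the terms with coincident $c_i$ (the bound $\sigma_2=O(|\mathcal{L}|^6)$, obtained by the truncation trick with $r_{A+S}\ge 5$ from Lemma \ref{l:C4_C5}, which once more uses that $A$ consists of squares); your $\binom{\deg}{t}$ count only ever sees distinct $c_i$, so that whole step disappears --- a modest simplification. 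Second, for \eqref{f:incidences_res} the paper uses a different curve than yours: it multiplies all six conditions $bc_i+d=b(c_i+d/b)$ and exploits that $b^6$ is a perfect square to land directly on $y^2=\prod_{i=1}^{6}(x+c_i)$ with $x=d/b$ (genus $2$), each rational $x$ fixing the ratio $d/b$ and hence at most $\tau(\mathcal{L})$ pairs of $\mathcal{L}$ --- the same $\tau$ mechanism as in your argument. Your two-square parametrisation with $s=v/u$ is also valid: the degree-$8$ polynomial $\prod_{i=3}^{6}(\alpha_i+\beta_i s^2)$ is squarefree (genus $3$), for fixed $s$ the pairs $(b,d)$ are proportional as you claim, and the degenerate loci ($u=0$, $v=0$, $s^2=1$, i.e.\ $b=0$) are single lines through the origin costing $O(\tau(\mathcal{L}))$ each.

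One side remark of yours is however wrong, though harmlessly so: coincidences among the ratios $-\alpha_i/\beta_i=(c_2-c_i)/(c_1-c_i)$ cannot occur, since $c\mapsto(c_2-c)/(c_1-c)$ is an injective M\"obius map on the distinct $c_i$; so ``surviving coincidences'' is not why one takes four factors. The true reason the paper needs six values of $c$ is parity --- $b^5$ is not a perfect square, so the paper's product trick fails with five --- and your substitution, under which each factor $\alpha_i+\beta_i s^2=(w_i/u)^2$ is individually a rational square, sidesteps that obstruction entirely. In fact, with only three factors (five values $c_1,\dots,c_5$) you would get a squarefree sextic in $s$, again genus $2$, so your method, run with $t=5$, actually proves the stronger estimate $O(\tau^{1/5}(\mathcal{L})\,|C|\,|\mathcal{L}|^{4/5}+|\mathcal{L}|)$ in the general case --- a small improvement over \eqref{f:incidences_res} that the paper's route does not give. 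As written, with $t=6$ and $t=4$, your proof is complete and yields the stated bounds.
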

\begin{proof} 
	Let $\tau = \tau(\mathcal{L})$ and let $\sigma$ be the number of the solutions to equation \eqref{f:incidences}.
	By the H\"older inequality, we have 
\[
	\sigma^6 \le |\mathcal{L}|^5 \sum_{(b,d)\in \mathcal{L}} \left(\sum_{c\in C} A(bc+d) \right)^6 
	=
\]
\begin{equation}\label{tmp:10.05_1} 
	=
	 |\mathcal{L}|^5  \sum_{c_1,\dots,c_6\in C}\, \sum_{(b,d)\in \mathcal{L}} A(bc_1+d) \dots  A(bc_6+d)  = \sigma_1 + \sigma_2 \,.
\end{equation} 
	Here the sum $\sigma_1$ corresponds to	the sum with different $c_j \in C$ 
	and 
	$\sigma_2$ is the rest.
	The arguments as in the proof of the upper bound for $\E(A,S)$ from Lemma \ref{l:C4_C5} gives us $\sigma_2 = O(|\mathcal{L}|^6)$.
	As for the sum $\sigma_1$, we see that  any tuple $bc_1+d, \dots, bc_6+d \in A$ corresponds to a solution to the equation of degree six, namely,  $\prod_{i=1}^6 (c_i+x) = y^2$ in rational numbers.
	By Conjecture \ref{uniform} the number of such solutions, i.e., the number of different $d/b$ is bounded as $BL_2$.
	But, clearly, it coins at most 
	$\tau BL_2 $
	to the sum $\sigma_1$. 
	Thus it gives us  
\[
	\sigma \ll \tau^{1/6} |C| |\mathcal{L}|^{5/6} + |\mathcal{L}| 
\]
    as required.

	To obtain \eqref{f:incidences_res2} we take the forth power instead of the sixth.
	It gives  the sum as in \eqref{tmp:10.05_1} with different $c_1,\dots,c_4$ 
\[
	\sum_{(b,d)\in \mathcal{L}} A(bc_1+d) \dots  A(bc_4+d)
	=
\]
\[
	=
	\sum_{b,d} \mathcal{L} (b,d-bc_1) A(d) A(d+b(c_2-c_1)) A(d+b(c_3-c_1)) A(d+b(c_4-c_1))
\]
	 and using the fact  that $B$ is a set of squares,  we arrive to the equation  $\prod_{i=2}^4 (x^2 + (c_i-c_1)) = y^2$ which has at most 
	 $BL_2$
	 solutions by Conjecture \ref{uniform}. 
	 Totally, we have at most $\tau BL_2$ solutions. 
$\hfill\Box$
\end{proof} 

\bigskip

\begin{remark}
	Let $A$ be a set of squares in an arithmetic progression $P = p+rj$, $j\in [k]$ and put $\lambda = \gcd(p,r)$, $p=\lambda p'$, $r=\lambda r'$.
	Now one can use formula \eqref{f:incidences_res} with $C=r\lambda^{-1}\cdot [k/s]$, $B=\lambda [s]$, 
	$D=P-r\cdot [k] = \lambda (p'+r'j)$, $-k<j<k$  and $\mathcal{L} \subset B \times D$ such that all pairs $(\lambda^{-1} b,\lambda^{-1} d)$ are coprime (here $s$ is a parameter, $s\sim k^{5/7}$).
	One can show that 
	$|\mathcal{L}| \gg |B||D|$, further $\mathcal{L}$ captures the most solutions to the equation $a=bc+d$, $a\in A$, $b\in B$, $c\in C$, $d\in D$ (see details in the proof of Theorem \ref{t:md_A} below) and thus Lemma \ref{l:incidences} 
	gives the estimate $|A| \ll k^{5/7}$ 
	which coincides with 
	the bound after Theorem 5 from \cite{CG}.  
	Formula \eqref{f:incidences_res2}  allows to obtain  a better bound, see inequality \eqref{f:AP_A} of Theorem   \ref{t:md_A} below. 
\end{remark}


One can 
see that Lemma \ref{l:incidences}, combining with the classical Burgess' method \cite{Burgess},  implies that the intersection of a set of squares with any arithmetic progression $P$ is, actually,  $O(|P|^{2/3})$
and this is slightly stronger than unconditional result from \cite{BGP} but weaker than the bound on such intersection from  \cite{BZ}.
Nevertheless, we think that our new conditional estimate  \eqref{f:md_A} is unreachable by delicate  methods from \cite{BGP}, \cite{BZ}.

\bigskip 

\begin{theorem}
	Let $A$ be a set of squares and  $H=P_1+\dots+P_d$ be a proper generalized arithmetic progression of dimension $d$.
	Suppose that there is $1\le m\le d$ arithmetic progressions $P_j$ with  $|P_j| \gg |H|^{\frac{3}{4m+2}}$ and denote by $R_m$ 
	size of the rest of $H$ (if the rest is empty, then put $R_m=1$).
	Then 
\begin{equation}\label{f:md_A}
	|A \cap H| 
	    \ll 8^{d} |H|^{\frac{3}{4}} \cdot \left( \frac{R^{2m+1}_m}{|H|^{\frac34}} \right)^{\frac{1}{8m+1}} \,. 
\end{equation}
	In particular, for any arithmetic progression $P$ one has 
\begin{equation}\label{f:AP_A}
	|A\cap P| \ll |P|^{\frac{2}{3}} \,.
\end{equation}
\label{t:md_A}
\end{theorem}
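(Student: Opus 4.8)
The plan is to reduce the problem of counting squares in $H$ to the point--line incidence bound of Lemma \ref{l:incidences}, exploiting the product structure of the generalized arithmetic progression to cast an element $a\in A\cap H$ in the form $a=bc+d$. Writing a typical element of $H$ as $a_0+\sum_{j=1}^d r_j x_j$ with $0\le x_j<|P_j|$, I would first separate the $m$ long directions $P_1,\dots,P_m$ from the remaining ones: the latter, of total size $R_m$, together with the various shift ranges, will be absorbed into the set $D$ of admissible values of $d$, so that they ultimately contribute the exponent $(2m+1)/(8m+1)$ on $R_m$ in \eqref{f:md_A}. This is the step that turns a bound for a genuinely $m$--dimensional progression into the stated bound for the full $d$--dimensional $H$, and accounts for the constant $8^d$ coming from the $O(1)$ loss incurred in each of the $d$ coordinate directions.

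Next I would build the incidence configuration from the long directions. In each of $P_1,\dots,P_m$ I split the index $x_j$ into a coarse part and a fine part at a scale $s_j$ to be optimised; the coarse parts assemble into the set $C$ (a proper coarse sub--progression of dimension $m$, so that $|C|\approx \prod_{j\le m}|P_j|/\prod_{j\le m}s_j$), while the fine scales combine with the step sizes to produce the multiplier set $B$. The decisive move is to choose the splitting so that $B$ consists of \emph{perfect squares}: this is what makes the stronger estimate \eqref{f:incidences_res2} (rather than \eqref{f:incidences_res}) available, replacing the exponent $5/6$ by $3/4$ and, after optimisation, the exponent $5/7$ by $2/3$. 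Concretely, once $b=\beta^2$ is a square the ratio $d/b$ is itself a square $x^2$, so the simultaneous conditions $bc_i+d\in A$ turn into the requirement that each $x^2+(c_i-c_1)$ be a square; these assemble into a genus $\ge 2$ curve $\prod_i(x^2+(c_i-c_1))=y^2$ to which Conjecture \ref{uniform} applies, exactly as in the proof of \eqref{f:incidences_res2}. I would enforce a coprimality (M\"obius) condition on the pairs $(b,d)\in\mathcal{L}$, as in the remark following Lemma \ref{l:incidences}, which simultaneously guarantees $\tau(\mathcal{L})=O(1)$ and $|\mathcal{L}|\gg |B|\,|D|$, so that no extraneous factors enter the incidence estimate.

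With the configuration in place I would run the double count. On one side, every $a\in A\cap H$ admits $\mu$ representations $a=bc+d$, where $\mu$ is controlled from below by the coprimality condition and is of order $|B|\,|C|$ up to constants; on the other side, Lemma \ref{l:incidences} bounds the total number of solutions by $O(\tau^{1/4}(\mathcal{L})|C|\,|\mathcal{L}|^{3/4}+|\mathcal{L}|)$. Dividing, $|A\cap H|\ll \tau^{1/4}|C||\mathcal{L}|^{3/4}/\mu + |\mathcal{L}|/\mu$, and I would optimise the scales $s_j$. The hypothesis $|P_j|\gg |H|^{\frac{3}{4m+2}}$ is precisely the threshold ensuring both that each long direction is long enough to be split at the optimal scale and that the main term $\tau^{1/4}|C||\mathcal{L}|^{3/4}$ dominates the lower--order term $|\mathcal{L}|$; balancing the two then yields \eqref{f:md_A}. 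Finally \eqref{f:AP_A} is the special case $d=m=1$, $R_1=1$, where the exponent collapses to $\tfrac34\cdot\tfrac89=\tfrac23$, giving $|A\cap P|\ll |P|^{2/3}$.

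The main obstacle is the construction in the second paragraph: arranging the splitting of the $m$ long directions so that the multiplier set $B$ genuinely consists of squares \emph{and} the combined product $bc$ still faithfully reproduces the coarse part of $\sum_j r_j x_j$, while keeping the representation multiplicity $\mu$ as large as $|B||C|$. Restricting $b$ to squares shrinks $|B|$ from $s$ to $\sqrt{s}$ and correspondingly lowers $\mu$, and only the improved exponent in \eqref{f:incidences_res2} compensates for this loss; getting the coprimality weights and the $m$--dimensional coarse progression to cooperate — so that $\tau(\mathcal{L})$ stays bounded and the optimisation over the $s_j$ closes at the claimed exponent — is the delicate part.
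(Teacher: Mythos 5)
You have the right skeleton, and it is the paper's: count solutions of $a=bc+d$ with the square--multiplier version \eqref{f:incidences_res2} of Lemma \ref{l:incidences}, control $\tau(\mathcal{L})$ and the $|\mathcal{L}_l|$ by a gcd/M\"obius decomposition, reduce the $d$--dimensional $H$ to its $m$ long directions, optimise one scale, and recover $|A\cap P|\ll|P|^{2/3}$ from $m=d=1$ (your computation $s^{-1/8}k^{3/4}+s$ with $s\sim k^{2/3}$ is exactly the paper's with $M=s$). But the step you yourself call ``the main obstacle'' is the actual content of the proof, and your proposed construction does not resolve it: an additive coarse/fine split $x_j=s_jy_j+z_j$ in each long direction cannot produce an incidence configuration, because for $m\ge 2$ the coarse part $\sum_{j\le m}r_js_jy_j$ is not of the form $b\cdot c$ for a \emph{single} scalar $b$ (the steps $r_j$ differ across directions), so Lemma \ref{l:incidences} never applies. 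The paper's device is multiplicative, not additive, and uses one common scale $M$ rather than per-direction scales $s_j$: extract $\lambda=\gcd(k_0,\dots,k_m)$ from the steps, shrink every long $P_j$ by the factor $\eps=1/M$ to form the sub-GAP $Q=\sum_{j\le m}k'_jx_j$, $0\le x_j<[\eps|P_j|]$, and let the multiplier be a square $i\in I\subseteq[M]$, $|I|\gg M^{1/2}$, acting by \emph{dilation} on all of $Q$ simultaneously; since $x_ji<|P_j|$ coordinate-wise, $A'+Q\cdot\lambda I\subseteq H'$ (a GAP with doubled sides), and the number of solutions of $a'+q\lambda i=h'$ is \emph{exactly} $|A'||Q||I|$, which replaces your multiplicity argument with $\mu\approx|B||C|$.

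Two further genuine discrepancies. You absorb the short directions (total size $R_m$) into $D$; the paper instead \emph{fixes} their coordinates and multiplies the resulting bound by $R_m$ at the end, and this is forced: the M\"obius estimate $|\mathcal{L}_l|=|\mathcal{L}_1|/l^2+O(d|H''|)$ rests on the solution count of $k'_0+\sum_{j\le m}k'_jx_j\equiv 0\pmod l$ being $|H''|/l+O(d|H''|/M)$ uniformly in $l\le M$, via the Chinese remainder theorem, coprimality of the $k'_j$, and properness of $H+H$ (whence the $2^d$ splitting behind the factor $8^d$); sides shorter than $M$ are not equidistributed modulo such $l$, so with them inside $D$ the $|\mathcal{L}_l|$ need not decay and $\sum_l|\mathcal{L}_l|^{3/4}$ is uncontrolled. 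Nor do you verify that your variant yields the exponent $(2m+1)/(8m+1)$ on $R_m$: in the paper it falls out of $|A|\ll 2^{3d/4}|H|^{3/4}R_m^{1/4}M^{-1/8}+2^{2d}M^mR_m$ with $M=\lceil(|H|/R_m)^{6/(8m+1)}\rceil$, together with the reduction to $R_m\le|H|^{3/(8m+4)}$ (otherwise \eqref{f:md_A} is trivial from $|A|\ll 2^{3d/4}|H|^{3/4}$, by Lemma \ref{l:C4_C5} and $|H+H|<2^d|H|$), which is also precisely where $|P_j|\gg|H|^{\frac{3}{4m+2}}$ is used, to guarantee $M\le\min_{j\le m}|P_j|$. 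Finally, a small slip in the mechanism: with $b=\beta^2$ it is not $d/b$ but $a_1/b=(bc_1+d)/b$ that is a rational square; the conditions that each $x^2+(c_i-c_1)$ be a rational square come from dividing $a_i=a_1+b(c_i-c_1)$ by $b$, as in the paper's proof of \eqref{f:incidences_res2}.
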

\begin{proof}
	We can assume that $A\subseteq H$ and moreover splitting $H$ onto $2^d$ parts we can assume that not only $H$ but even  $H+H$ is a proper generalized  arithmetic progression. 
	It will coast us the factor $2^d$ in our final bound \eqref{f:md_A}. 

    Let $\left\lceil |H|^{\frac{3}{4m+2}} \right\rceil \le M \le (|H|/2)^{1/d}$ be a parameter which we will choose later, and let $\eps = 1/M$.
	Also, let $H= P_1+\dots +P_d = k_0 + \sum_{j=1}^d k_j x_j$, $0\le x_j < |P_j|$.
	Take all progressions $P_j$,  having sizes greater than $M$. 
	Without loosing of the generality we can assume that we have first $m\ge 1$ of such progressions and fix elements of the progressions $P_{m+1},\dots, P_d$ from the rest. 
	In this case the shift $k_0$ can be changed but we use the same letter $k_0$ for this number.  
	Also, we use the  letter $A'$ to the subset of $A$ in this generalized  arithmetic progression of dimension $m$. 
	Put $\lambda = \gcd(k_0,k_1, \dots, k_m)$ and let $k_j = \lambda k'_j$. 
	Notice that $\gcd(k'_0,k'_1, \dots, k'_m) = 1$.
    Shrinking these progressions $P_j$, $j\in [m]$ in $\eps$ times, we obtain  a new proper generalized  arithmetic progression $Q = \sum_{j=1}^m k'_j x_j$ of size $|Q| =  \prod_{j=1}^m [\eps |P_j|]$.  
	Also, let $I\subseteq [M]$ be the set of squares in $[M]$, $|I| \gg M^{1/2}$. 
	We have $A'+ Q \cdot \lambda I \subseteq H' = k_0 + \sum_{j=1}^m k_j x_j$,
	$0\le x_j < 2|P_j|-1$.
	In other words, $H'$ is a new generalized  arithmetic progression of dimension $m$ such that  $|H'| \le 2^m \prod_{j=1}^m |P_j|$.  
	The number of the solutions to the equation $a+q\cdot \lambda i=h'$, $a\in A'$, $q\in Q$, 
	$i\in I$, $h'\in H'$ 
	is 
	exactly  
    $|A'||Q||I|$. 
	Split the set $I \times H'$ onto the sets $\mathcal{L}_l$ such that for any pair $(i,h') \in \mathcal{L}_l$ we have $\gcd(i,\lambda^{-1} h') = l$. 
    Let $H'' = \lambda^{-1} H' = k'_0 + \sum_{j=1}^m k'_j x_j$,
	$0\le x_j < 2|P_j|-1$.
	Now, clearly, we have   $|I_l| = |I|/l + O(1)$. 
	Let us calculate $H''_l$, $l\le M$.  
	To do this 
	consider the equation 
	\begin{equation}\label{f:H_l}
	    k'_0 + \sum_{j=1}^m k'_j x_j \equiv 0 \pmod l    \,.
	\end{equation} 
	If $x_j$ run over $\Z/l\Z$, then by the Chinese remainder theorem the function $q(l)$ counting the number of the solutions to equation \eqref{f:H_l} is multiplicative.    
	We know that $\gcd(k'_0,k'_1, \dots, k'_m) = 1$  and that $H+H$ is a proper generalized  arithmetic progression. 
	Hence equation \eqref{f:H_l} has 
	\[
	    l^{m-1}  \prod_{j=1}^m \left(\frac{|P_j|}{l} + O(1) \right) 
	    = \frac{|H''|}{l} + O\left(\frac{d|H''|}{M} \right) 
	\]
	solutions.
	It is easy to see that the numbers $|\mathcal{L}_l|$ decrease as $O(|\mathcal{L}_1|/l^2)$.
	Indeed, e.g.,  by the M\"obius transform, we have 
	$$
	    |\mathcal{L}_l| = \sum_t \mu (t) |I_{lt}| |H''_{lt}| = 
	    \sum_t \mu (t) \left( \frac{|I|}{lt} + O(1) \right) \left( \frac{|H''|}{lt} + O\left(\frac{d|H''|}{M} \right) \right)
	    =
	$$
	\begin{equation}\label{tmp:16.05_1}
	    =
	    \frac{1}{l^2} \sum_t \mu (t) \frac{|I| |H''|}{t^2} + O(d|H''|) 
	    =
	    \frac{|\mathcal{L}_1|}{l^2} + O(d|H''|) \,.
	\end{equation} 
	Using Lemma \ref{l:incidences} and estimate \eqref{tmp:16.05_1}, we get
\[
	|A'||Q||I| 
	\ll |Q| \sum_l |\mathcal{L}_l|^{3/4} + |I| |H'|  
	\ll
	|Q| (|I| |H'|)^{3/4} + |I| |H'| 
\]
	and hence
\[
	|A'| \ll \frac{2^{3m/4}}{M^{1/8}} \prod_{j=1}^m |P_j|^{3/4}  + 2^{2m} M^m \,. 
\]
    Returning to our initial set $A$, we see that  
\begin{equation}\label{f:|A|_final_md}
    |A| \le \left( \frac{2^{3m/4}}{M^{1/8}} \prod_{j=1}^m |P_j|^{3/4}  + 2^{2m} M^m \right) 
    R_m 
    \le
    \frac{2^{3d/4} |H|^{3/4} R^{1/4}_m}{M^{1/8}} + 2^{2d} M^{m} R_m \,.
\end{equation}
    We can assume that $R_m \le |H|^{\frac{3}{8m+4}}$ because otherwise the result is trivial in view of Lemma \ref{l:C4_C5} and the fact that $|H+H| < 2^d |H|$. 
	Now taking 
    $$
    M = \left\lceil (|H|/R_m)^{\frac{6}{1+8m}}\right\rceil \ge \left\lceil |H|^{\frac{3}{4m+2}} \right\rceil 
    $$
	we obtain 
\begin{equation}\label{f:|A|_final_md'}
	|A| \ll 4^{d} |H|^{\frac{3}{4}} \cdot \left( \frac{R^{2m+1}_m}{|H|^{\frac34}} \right)^{\frac{1}{8m+1}} 
\end{equation}
	as required.

	In the case when $H$ is an arithmetic progression we do not need to split $H$ onto $Q$ and onto the rest, hence $m=d=1$, $R_d=1$ and
	estimates \eqref{f:|A|_final_md}, \eqref{f:|A|_final_md'} give the required bound.
	This completes the proof. 
$\hfill\Box$
\end{proof}

\bigskip

Theorem  \ref{t:md_A} works for relatively small $d$ or, more precisely, for generalized arithmetic progressions with relatively large sides.
We use the arguments from the proof of Theorem  \ref{t:md_A} in the next section.


\section{New bound for the energy of squares}\label{Squares2}

We need in two auxiliary results. 
The first one is \cite[Lemma 13]{SS_Balog} about multiplicative structures contained in additively rich sets. 
This result depends on our knowledge about the Polynomial Freiman--Ruzsa Conjecture, see \cite{Sanders_2A-2A}, \cite{Sanders_3log}.

\begin{lemma}
	Let $A$ be a subset of an abelian  ring such that  $|A+A| \le K|A|$.
	Then there exists an absolute constant $C >0$
	such that for any positive integers  $d\ge 2$ and $l$ there is 
	a set $Z$	of size  $|Z| \ge   \exp \left(-C l^{3} d^2 \log^2 K \right) |A|$ with
	\begin{equation}\label{f:mult_inclusion_3}	
	[d^l] \cdot Z \subseteq 2A-2A \,.
	\end{equation} 
\label{l:mult_2A-2A}
\end{lemma}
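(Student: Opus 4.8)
The plan is to derive the inclusion from a structural description of $2A-2A$ together with an elementary "shrinking" construction. Since $|A+A|\le K|A|$, the quantitative Bogolyubov--Ruzsa theorem (the Freiman-type result established by Sanders on the basis of the Polynomial Freiman--Ruzsa machinery, \cite{Sanders_2A-2A}, \cite{Sanders_3log}) guarantees that $2A-2A$ contains a \emph{proper symmetric} generalized arithmetic progression
\[
    Q=\Big\{\, \textstyle\sum_{i=1}^r n_i\xi_i ~:~ |n_i|\le N_i \,\Big\}
\]
of dimension $r=O(\log^2 K)$ and size $|Q|=\prod_{i=1}^r(2N_i+1)\ge \exp\!\big(-O(\log^2 K)\big)|A|$. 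This is the only place where the hypothesis on $A$ (and PFR) enters; everything afterwards is elementary. I write $r$ rather than the paper's letter for GAP dimension to avoid clashing with the parameter $d$ in the statement.

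Given such a $Q$, I would build $Z$ by dividing every side length by $d^l$, namely
\[
    Z=\Big\{\, \textstyle\sum_{i=1}^r n_i\xi_i ~:~ |n_i|\le N_i/d^l \,\Big\}\subseteq Q \,.
\]
For any $j\in[d^l]$ and any $z=\sum_i n_i\xi_i\in Z$ one has $j z=\sum_i (jn_i)\xi_i$ with $|jn_i|\le d^l\cdot N_i/d^l=N_i$, so $jz\in Q$ precisely because $Q$ is proper (the box of admissible coefficients coincides, as a set, with $Q$). Hence $[d^l]\cdot Z\subseteq Q\subseteq 2A-2A$, which is the desired inclusion. Here $jz$ means the $j$-fold sum $z+\dots+z$, so only the $\Z$-module structure of the ambient ring is used; no ring multiplication is needed.

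It remains to lower-bound $|Z|$. Shrinking each of the $r$ sides by $d^l$ costs a factor of at most $O(d^{l})$ per coordinate: a side with $2N_i+1$ points becomes one with $2\lfloor N_i/d^l\rfloor+1$ points, and even in the extreme case $N_i<d^l$, where it collapses to the single point $0$, the ratio is still $\ge 1/(2d^l)$. Therefore
\[
    |Z|\ge |Q|\,(2d^l)^{-r}\ge \exp\!\big(-O(\log^2 K)-lr\log d\big)|A| \,.
\]
Substituting $r=O(\log^2 K)$ and using the crude bounds $\log d\le d^2$ and $l\le l^3$ to collapse $O(\log^2 K)(1+l\log d)$ into $Cl^3 d^2\log^2 K$ yields $|Z|\ge \exp(-Cl^3 d^2\log^2 K)|A|$, as claimed.

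The shrinking construction is the conceptual core and is routine; the genuine content — and the only real obstacle — is the input structure theorem with \emph{both} the dimension and the relative size loss controlled by $\log^2 K$. Matching the stated exponent precisely thus amounts to invoking the sharpest available quantitative Bogolyubov--Ruzsa bound (which is where the dependence on the Polynomial Freiman--Ruzsa Conjecture resides) and tracking the constants through the two crude estimates above. Any weaker polylogarithmic structure theorem would still prove the lemma, merely with a larger power of $\log K$, which is all the applications in the following sections actually use.
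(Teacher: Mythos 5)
The paper does not actually prove this lemma: it is imported verbatim from \cite[Lemma 13]{SS_Balog}, so your attempt can only be judged on its own merits. Your shrinking construction is fine as far as it goes: the inclusion $[d^l]\cdot Z\subseteq Q$ follows directly from the definition of $Q$ (properness is irrelevant there --- you misattribute its role; it is needed exactly once, to guarantee $|Z|=\prod_i \left(2\lfloor N_i/d^l\rfloor+1\right)\ge |Q|(2d^l)^{-r}$), and you are right that only the $\Z$-module structure is used. The genuine gap is the input you feed it. A proper symmetric GAP inside $2A-2A$ with \emph{both} dimension and logarithmic size loss $O(\log^2 K)$ is not a theorem: Sanders' results \cite{Sanders_2A-2A}, \cite{Sanders_3log} give exponent $3+o(1)$ at best --- the paper itself, in the proof of Theorem \ref{t:E_squares}, invokes Sanders in the form ``size $|D|\exp(-C_*\log^{\kappa}M)$ where $\kappa>3$ is any constant'' --- and an exponent-$1$ (even exponent-$2$) version is exactly the content of the open polynomial Bogolyubov/Freiman--Ruzsa range. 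The paper's remark that the lemma ``depends on our knowledge about the Polynomial Freiman--Ruzsa Conjecture'' refers to the quality of known partial progress, not to a licence to assume a $\log^2 K$ structure theorem. So the one non-elementary step of your proof rests on a statement strictly stronger than anything in the cited literature.

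It is worth seeing what your argument really yields with the true input: dimension and loss $\log^{3+o(1)}K$ give $|Z|\ge \exp\left(-O(l\log d\cdot \log^{3+o(1)}K)\right)|A|$, which is incomparable with the stated bound --- better in the $l$- and $d$-dependence ($l\log d$ versus $l^3d^2$), worse in $K$ --- and in particular does not prove the lemma as written. Indeed, the shape $l^3d^2\log^2 K$ is itself a tell that the source proof is not a one-shot GAP-shrinking argument (which, as your computation shows, would produce $l\log d$ times the Bogolyubov--Ruzsa loss) but an iterated Croot--Sisask almost-periodicity argument working scale by scale through the base-$d$ digits of $[d^l]$, with $\log^2 K$-type losses per scale accumulating to the $l^3$. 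Finally, your closing claim that the later sections ``only use some polylogarithmic'' version is too quick: the choices $l\sim(\log|A|/\log^2 M)^{1/3}$ in Theorem \ref{t:E_squares} and $l\sim(\log|H|/\log^2 K)^{1/3}$ in Theorem \ref{t:Qk} are tuned to the exponent $2$, and the resulting exponents ($\alpha<\frac{1}{11}$, $\log^{3/11}|A|$, $\log^{1/3}N$) would all degrade under a weaker power of $\log K$; the theorems would survive qualitatively but not as stated.
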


The second result is a special case of \cite[Theorem 6.3]{s_mixed}.

\begin{theorem}
	Let $A\subseteq \Gr$ be a set, $\E(A) = |A|^3/K$, $\E_4 (A) = M|A|^5/K^3$.
	Then there is an absolute constant $C>0$ and a set $A'\subseteq A$, $|A'|\gg |A|/M^C$ such that for any positive integers $n,m$ one has 
	$|nA'-mA'| \ll M^{C(n+m)} K|A'|$. 
\label{t:struct_E2E4}
\end{theorem}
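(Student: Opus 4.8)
The plan is to isolate a single scale of differences that carries almost all of the additive energy of $A$, and then to run a Balog--Szemer\'edi--Gowers argument whose losses are governed by $M$ rather than by $K$, finally propagating the resulting structure to all iterated sumsets. First I would dyadically decompose $A-A$ according to the size of $r_{A-A}$: by pigeonholing in the identity $\mathsf{E}(A)=\sum_x r^2_{A-A}(x)=|A|^3/K$ there is a threshold $\tau$ and a level set $D=\{x:\ r_{A-A}(x)\sim\tau\}$ with $\tau^2|D|\gg |A|^3/K$ up to a logarithmic factor. The hypothesis $\mathsf{E}_4(A)=M|A|^5/K^3$ gives simultaneously $\tau^4|D|\le \mathsf{E}_4(A)$, and comparing the two pins $\tau$ and $|D|$ down in terms of $M$ and $K$. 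The point of the fourth moment is that it forces the energy to be \emph{flat} on $D$ (no thin set of very popular differences dominates), and this flatness is precisely what later turns $K$-power losses into $M$-power losses.

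Next I would model the pairs realising $D$ as an almost-regular graph on $A$, joining $a\sim b$ whenever $a-b\in D$; this graph has about $\tau|D|$ edges and its normalised edge count is controlled by $M$. Applying the (higher-energy) Balog--Szemer\'edi--Gowers theorem to it should produce a subset $A'\subseteq A$ with $|A'|\gg |A|/M^{O(1)}$ inside which a proportion $\gg M^{-O(1)}$ of all pairs have their difference in $D$, whence $A'$ has a small difference set, $|A'-A'|\ll M^{O(1)}K|A'|$. Here the flatness from the previous step is what keeps both the density deficiency and the doubling bounded by powers of $M$ times a single power of $K$.

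The delicate point is the passage to all $nA'-mA'$. One cannot simply insert the doubling constant of $A'$ into the Pl\"unnecke--Ruzsa inequality \eqref{f:Pl-R}, since that yields $|nA'-mA'|\ll (M^{O(1)}K)^{n+m}|A'|$, i.e.\ a factor $K^{n+m}$ rather than the single $K$ demanded by the statement. Instead I would realise $A'$ as a dense subset of a host set $H$ of low additive complexity -- a proper generalised arithmetic progression (or coset progression) of rank $O(\log M)$ with $|H|\ll M^{O(1)}K|A'|$ -- built from the popular-difference structure $D$. For such a host the iterated sumsets grow only polynomially in $n+m$, so $nA'-mA'\subseteq nH-mH$ gives $|nA'-mA'|\le (n+m)^{O(\log M)}|H|\ll M^{O(n+m)}K|A'|$, with the single $K$ coming from $|H|\ll M^{O(1)}K|A'|$ and the $M$-power exponent from the bounded rank.

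I expect this last step to be the main obstacle, and it is exactly where the hypothesis on $\mathsf{E}_4(A)$ (as opposed to merely $\mathsf{E}(A)$) is indispensable: one must extract, with only polynomial-in-$M$ losses, a genuinely low-rank host for $A'$ so that the factor $K$ does not compound under iterated addition. This is the feature that distinguishes the asserted bound from what the classical Balog--Szemer\'edi--Gowers-plus-Pl\"unnecke combination gives, and carrying it out quantitatively -- keeping every constant polynomial in $M$ while controlling the rank by $\log M$ -- is the technical heart of the argument.
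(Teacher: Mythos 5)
First, a point of comparison: the paper does not prove this statement itself --- it quotes it as a special case of \cite[Theorem 6.3]{s_mixed} --- so your proposal can only be measured against that cited argument. Your first two stages are essentially sound and do match the opening of that proof: the popular differences of $A$ carry the energy, and the fourth moment forces the associated graph to be dense up to powers of $M$ only. Concretely, with $P=\{x ~:~ r_{A-A}(x)\ge |A|/(2K)\}$ one has $\sum_{x\in P}r^2_{A-A}(x)\ge \E(A)/2$, the H\"older inequality gives $\sum_{x\in P}r_{A-A}(x)\ge (\E(A)/2)^{3/2}\,\E_4(A)^{-1/2}\gg |A|^2/\sqrt{M}$, and the tail bound gives $|P|\le 16MK|A|$; a Balog--Szemer\'edi--Gowers extraction applied to this graph then yields $A'$ with $|A'|\gg M^{-O(1)}|A|$ and $|A'-A'|\ll M^{O(1)}K|A'|$, exactly as you say. (Do use this fixed threshold rather than your dyadic pigeonholing: the $\log |A|$ losses of the dyadic decomposition are not absorbable into $M^{C}$ when $M=O(1)$, so your version would not even give $|A'|\gg |A|/M^C$.)

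The genuine gap is your third step. There is no result that converts the single hypothesis $|A'-A'|\le M^{O(1)}K|A'|$ into a container $H\supseteq A'$ of rank $O(\log M)$ and size $M^{O(1)}K|A'|$: every Freiman-type theorem pays in the full doubling constant $M^{O(1)}K$. Freiman--Chang gives rank and size losses exponential in it, Sanders' Bogolyubov--Ruzsa \cite{Sanders_2A-2A} gives rank $\log^{O(1)}(MK)$, and even the Polynomial Freiman--Ruzsa conjecture would only cover $A'$ by $(MK)^{O(1)}$ translates of a structured set, which after $(n+m)$-fold iteration reinstates a factor $K^{O(n+m)}$. The deeper issue is that once you pass to $A'$ and retain only its doubling, the $\E_4$ information has been spent, and rank $O(\log M)$ simply is not implied by doubling $M^{O(1)}K$. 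The cited proof avoids containers altogether: the set $A'$ is constructed so that, inductively in $n+m$, every element of $nA'-mA'$ remains a \emph{popular difference of the original set} $A$, with the popularity threshold degrading by a factor $M^{O(1)}$ per summand, i.e. $nA'-mA'\subseteq\{x ~:~ r_{A-A}(x)\ge |A|/(M^{c(n+m)}K)\}$. The single power of $K$ then falls out of the global moment tail bound $|\{x ~:~ r_{A-A}(x)\ge |A|/(M^{c}K)\}|\le \E_4(A)\cdot (M^{c}K/|A|)^4=M^{4c+1}K|A|$, valid for every $n,m$ simultaneously. Your instinct that $\E_4$ is indispensable is right, but it must be deployed as a tail estimate on $A$ at the final counting step, not as input to a structure theorem for $A'$; as written, your concluding step asserts a container theorem that fails, and the theorem's conclusion is out of reach along that route.
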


Now we are ready to prove the main result of this section.

\begin{theorem}
	Let $A$ be a set of squares.
	Then for any $\alpha < \frac{1}{11}$ one has 
\[
 	\E(A) \ll |A|^{8/3} \cdot \exp(-O(\log^{\alpha} |A|)) \,.
\]
\label{t:E_squares}
\end{theorem}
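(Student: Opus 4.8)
The plan is to prove the energy bound $\E(A) \ll |A|^{8/3} \cdot \exp(-O(\log^\alpha |A|))$ by combining the structural results Theorem \ref{t:struct_E2E4} and Lemma \ref{l:mult_2A-2A} with the incidence machinery of Lemma \ref{l:incidences}, arguing by contradiction. Suppose the energy were large, say $\E(A) = |A|^3/K$ with $K \ll |A|^{1/3}\cdot \exp(o(\log^\alpha |A|))$, i.e. $K$ essentially of size $|A|^{1/3}$ up to a subpolynomial factor. From Lemma \ref{l:C4_C5} we already know $\E_4(A) \le (BL_2+6)|A|^4$, so in the notation of Theorem \ref{t:struct_E2E4} we have $\E_4(A) = M|A|^5/K^3$ with $M \asymp K^3/|A| \ll \exp(o(\log^\alpha|A|))$ subpolynomial. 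This is the crucial input: the fourth energy being almost as small as possible forces $M$ to be tiny, and Theorem \ref{t:struct_E2E4} then hands us a large subset $A' \subseteq A$, $|A'| \gg |A|/M^C$, with uniformly controlled higher sumsets, $|nA'-mA'| \ll M^{C(n+m)} K|A'|$.

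Next I would feed this doubling control into Lemma \ref{l:mult_2A-2A}. Since $|A'+A'| \le K'|A'|$ with $K' \ll M^{2C}K \ll |A|^{1/3}\exp(o(\log^\alpha|A|))$, the lemma produces, for parameters $d$ and $l$ to be optimized, a set $Z$ with $|Z| \ge \exp(-Cl^3 d^2 \log^2 K')\,|A'|$ and the multiplicative containment $[d^l]\cdot Z \subseteq 2A'-2A'$. The point of this step is to manufacture a genuinely multiplicative structure, namely a long dilation family $\{c\cdot z : c \in [d^l],\, z \in Z\}$ sitting inside a set of small additive doubling, which is exactly the configuration Lemma \ref{l:incidences} is designed to count. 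I would set this up as an incidence problem of the form $a = bc + d$ with $A$ (squares) on one side and the multiplicative dilates $[d^l]$ playing the role of the $c$-variable, so that the genus-$\ge 2$ superelliptic curve bound from Conjecture \ref{uniform} caps the relevant fibers.

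The heart of the argument is then a counting inequality: on one hand the multiplicative structure guarantees \emph{many} incidences (at least $\gg |Z|\cdot d^l \gg |A|^{2}\exp(-Cl^3d^2\log^2 K')/M^C$ type lower bound, since each $z\in Z$ admits $d^l$ dilates all lying in the squares-rich set $2A'-2A'$), while on the other hand Lemma \ref{l:incidences} with exponent $3/4$ gives an upper bound of the shape $O(\tau^{1/4}|C||\mathcal{L}|^{3/4} + |\mathcal{L}|)$. Balancing these two estimates yields an inequality of the form $\exp(-Cl^3 d^2 \log^2 K') \cdot d^l \ll (\text{polynomial in }|A|,K)$, and then I would optimize over $d$ and $l$. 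The natural choice is to take $l \sim \log^{\alpha} |A|$ and $d$ a small constant (or $d,l$ balanced so that $l^3 d^2 \sim d^l$), which under the constraint $K \ll |A|^{1/3}$ produces a contradiction unless $K \ll |A|^{1/3}\exp(-O(\log^\alpha |A|))$ is violated in the other direction, i.e. unless $\E(A) \ll |A|^{8/3}\exp(-O(\log^\alpha|A|))$ as claimed. The restriction $\alpha < 1/11$ should emerge precisely from the optimization: the exponent $3$ on $l$ and $2$ on $d$ in the $\exp(-Cl^3d^2\log^2 K')$ term of Lemma \ref{l:mult_2A-2A}, balanced against the gain $d^l$ and the polynomial losses $M^C$ from Theorem \ref{t:struct_E2E4}, forces $\alpha$ below the threshold $1/(3+\text{const})$.

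The main obstacle I anticipate is the bookkeeping in the balancing step: tracking how the subpolynomial factor $M^C$ (which itself depends on $K$, hence on the quantity we are trying to bound) propagates through both Theorem \ref{t:struct_E2E4} and Lemma \ref{l:mult_2A-2A} without the dependencies circling back and weakening the conclusion. Because $M \asymp K^3/|A|$ and we are assuming $K$ close to $|A|^{1/3}$, the factor $M$ is only \emph{barely} subpolynomial, so the $\exp(-Cl^3 d^2 \log^2 K)$ savings must be carefully calibrated against the $M^{C(n+m)}$ and $M^C$ losses to ensure the net exponential gain survives; getting the value $1/11$ exactly right will require pinning down the constant $C$'s relationship and choosing $l,d$ so that the leading-order exponents cancel. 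A secondary technical point is verifying that the curve $\prod(x^2 + \text{shift}) = y^2$ arising from the dilated configuration genuinely has genus $\ge 2$ (so Conjecture \ref{uniform} applies), which requires the shifts to be distinct and nonzero — a condition one arranges by passing to a positive-proportion sub-configuration as in the proof of Lemma \ref{l:C4_C5}.
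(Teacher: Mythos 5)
Your high-level architecture does match the paper's (the dichotomy on $M$, Theorem \ref{t:struct_E2E4} to extract $A'$ with controlled iterated sumsets, Lemma \ref{l:mult_2A-2A} to manufacture a dilation structure, Lemma \ref{l:incidences} to count), but there is a fatal quantitative error at the step where you invoke Lemma \ref{l:mult_2A-2A}: you apply it to $A'$ itself, whose doubling is $K' \ll M^{2C}K \approx |A|^{1/3}$. Then $\log^2 K' \asymp \log^2|A|$, so the lemma only yields $|Z| \ge \exp(-Cl^3d^2\log^2|A|)\,|A'|$, and this loss swamps the gain $d^l$ for \emph{every} choice of parameters: beating it would require $l\log d \gg l^3 d^2 \log^2|A|$, i.e.\ $\log d/d^2 \gg l^2\log^2|A|$, which is impossible since the left-hand side is bounded. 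In particular your suggested choice $l \sim \log^{\alpha}|A|$ gives $|Z| \gg \exp(-C\log^{2+3\alpha}|A|)\,|A'|$, which is less than $1$. The idea you are missing is the paper's crucial move: apply Lemma \ref{l:mult_2A-2A} not to $A'$ but to $D' := A'-A'$. Under the contradiction hypothesis $K \ll |A|^{1/3}\exp(O(\log^{\alpha}|A|))$ one has, by Cauchy--Schwarz, $|D'| \ge |A'|^4/\E(A') \ge |A'|^4 K/|A|^3 \gg K|A'|/M^{O(1)}$, while $|D'+D'| = |2A'-2A'| \ll M^{O(1)}K|A'|$ by Theorem \ref{t:struct_E2E4}; hence $D'$ has doubling only $M^{O(1)} = \exp(O(\log^{\alpha}|A|))$. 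Only for this set is the loss $\exp(-C_1 l^3\log^2 M)$ small enough to allow $l \sim (\log|A|/\log^2 M)^{1/3} \approx \log^{(1-2\alpha)/3}|A|$, so that the squares $B \subseteq [2^l]$, $|B| \gg 2^{l/2}$, deliver a net exponential gain.

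A second genuine gap: Lemma \ref{l:incidences} requires control of $\tau(\mathcal{L})$, and your proposal never supplies any mechanism for it ($Z$ and $2D'-2D'$ are a priori unstructured). The paper obtains it by invoking Sanders' theorem to find a proper GAP $H$ of size $|D|\exp(-C_*\log^{\kappa} M)$, $\kappa>3$, inside $D = 3D'-2D'$, covering $D$ by $|X| \ll \exp(C_*\log^{\kappa}M)\log|A|$ translates of $H$, and then running the gcd-splitting of Theorem \ref{t:md_A} on each $B\times(H+x)$ so that $\tau$ and the $|\mathcal{L}_l|$ become harmless. This covering step --- not the $l^3d^2$-versus-$d^l$ balance you conjecture --- is the actual source of the threshold: the final inequality $2^{l/2}|A| \ll M^{C_2}|X|^4K^3$ needs $M^{C_2}|X|^4 \ll 2^{l/4}$, i.e.\ $\kappa\alpha < (1-2\alpha)/3$, giving $\alpha < 1/(3\kappa+2) \to 1/11$ as $\kappa \to 3$. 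Finally, note that in the counting step the roles in Lemma \ref{l:incidences} must be $b \in B =$ squares of $[2^l]$ and $c \in Z$, so that the stronger bound \eqref{f:incidences_res2} with exponent $3/4$ applies; your setup, with the dilates $[d^l]$ in the $c$-role (so a non-square set in the $b$-role), only gives \eqref{f:incidences_res} with exponent $5/6$, and tracing the computation through this yields merely $K \gg 2^{l/10}|A|^{1/5}$, which is weaker than the unconditional bound $K \gg |A|^{1/3}$ from Lemma \ref{l:C4_C5} and proves nothing.
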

\begin{proof} 
	Write $\E(A) = |A|^3/K$, $\E_4 (A) = M|A|^5/K^3$ and we know by Lemma \ref{l:C4_C5} that $\E_4 (A) \ll |A|^4$ as well as $K\gg |A|^{1/3}$. 
	Hence $M\ll K^3 |A|^{-1}$. 
	If $M\gg \exp (C_0 \log^{\alpha} |A| )$, then we are done. 
	Here and below $C_j$ are absolute positive  constants.  
	Applying Theorem \ref{t:struct_E2E4}, we find a set $A'\subseteq A$, $|A'|\gg |A|/M^C$ such that for any positive integers $n,m$ one has 
	$|nA'-mA'| \ll M^{C(n+m)} K|A'|$, where $C>0$ is an absolute constant from Theorem \ref{t:struct_E2E4}. 
	Now using Lemma \ref{l:mult_2A-2A} with $d=2$, a parameter $l \sim  (\log |A| /\log^2 M )^{1/3}$ and $A=A'-A':=D'$ we find a set $Z$, $|Z| \ge   \exp \left(-C_1 l^{3} \log^2 M \right) |D'|$ with
$
	[2^l] \cdot Z \subseteq 2D'-2D' \,.
$
	Putting $B$ be squares of $[2^l]$, $|B| \gg 2^{l/2}$,  $C=Z$ and $D=3D'-2D'$, we obtain exactly $|B||C||A'|$  solutions to the equation $d=a'+bz$, where $a'\in A'$, $d\in D$, $b\in B$, $z\in Z$.   
	Now by the main result of  \cite{Sanders_3log} the set $D$ contains a proper GAP, say,  $H$ of size $|D|\exp(-C_* \log^{\kappa} M)$, where $\kappa > 3$ is any constant. 
	Hence applying 
	the 
	convering lemma from \cite[Exercise 1.1.8]{TV}, we find $X$, $|X| \ll \exp(C_* \log^{\kappa} M) \log |A|$ such that $D\subseteq H+X$. 
	Then we have 
\begin{equation}\label{tmp:20.05_1}
    |B||Z||A'| \le \sum_{x\in X} |\{ d=a'+bz ~:~ a'\in A', d\in H+x, b\in B, z\in Z \}| := \sum_{x\in X} \sigma(x) \,.
\end{equation}
    Fix $x\in X$ and estimate each $\sigma (x)$ separately. 
    As in the proof of Theorem \ref{t:md_A} split the set $B\times (H+x)$ onto the sets $\mathcal{L}_l$ such that for any pair $(b,h+x) \in \mathcal{L}_l$ we have $\gcd(b,\lambda^{-1} (h+x)) = l$, where $\lambda = \lambda (x)$ is the correspondent $\gcd$ of steps of $H+x$.   
	Using the arguments as in the proof of Theorem \ref{t:md_A} and applying  the second part of  Lemma \ref{l:incidences}, we have 
\[
    \sigma(x) \ll |B| |H| +  |Z| \sum_l |\mathcal{L}_l|^{3/4} \ll |B| |H| +  |Z| (|B||H|)^{3/4} \,.
\]
	Returning to \eqref{tmp:20.05_1}, we obtain 
\[
	|B||Z||A'| \ll |X| |B| |D| +  |X| |Z| (|B||D|)^{3/4} \ll |X| |Z| (|B||D|)^{3/4}  \,.
\] 
   Here we have used our choice of the parameter $l$.
	In other words, by the Pl\"unnecke--Ruzsa inequality \eqref{f:Pl-R} and the bound $M \ll \exp (C_0 \log^{\alpha} |A| )$ as well as the condition $\alpha <\frac{1}{11}$, we get 
\[
	2^{l/2} |A| \ll M^{C_2} |X|^4 K^3 \ll 2^{l/4} K^3 
\]
	Hence using the bound $M \ll \exp (C_0 \log^{\alpha} |A| )$ and our choice of $l$, we derive 
\[
	K\gg 2^{\frac{l}{12}} |A|^{\frac{1}{3}} \gg |A|^{\frac{1}{3}} \cdot \exp(O(\log^{\frac{3}{11}} |A|)) 
\]
and this is even better than required. 
$\hfill\Box$
\end{proof} 

\medskip

\section{Another additive problem for squares}
More than 50 years ago Paul Erd\H{o}s asked the following question in \cite[Problem 40]{E_63}. Are there integers $a_1,\ldots a_k$ such that $a_i+a_j$ is a square for any $1\leq i<j\leq k$? There are other variants of the question like several problems in sections D14 and D15 in Richard Guy's problem book \cite{G}.

As the curve $y^2=(x+a_1)(x+a_2)(x+a_3)(x+a_4)(x+a_5)$ has genus two, by Conjecture \ref{uniform} we have $k\leq BL_2.$ In a more general statement we can show the following.

\begin{theorem}
If $A$ and $B$ are two sets of integers so that $5\leq|A|\leq|B|=n$ then the number of $a\in A$,  $b\in B$ pairs such that $a+b$ is a square is $O(|A||B|^{4/5}+|B|).$
\end{theorem}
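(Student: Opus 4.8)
The plan is to phrase this as an edge-counting problem for a bipartite graph and to combine the Uniformity Conjecture with the K\H{o}v\'ari--S\'os--Tur\'an Theorem. I form the bipartite graph $G$ on vertex classes $A$ and $B$, joining $a\in A$ to $b\in B$ by an edge exactly when $a+b$ is a perfect square; the quantity to be bounded is precisely the number of edges of $G$.

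The key input is a forbidden complete bipartite subgraph coming from Conjecture \ref{uniform}. Fix any five distinct elements $a_1,\dots,a_5\in A$ --- this is where the hypothesis $|A|\ge 5$ enters. If $b\in B$ is a common neighbour of all five, then each $a_i+b$ is a square, so $\prod_{i=1}^5(a_i+b)$ is a perfect square and $x=b$ yields an integer point on the curve $y^2=\prod_{i=1}^5(x+a_i)$. Since the $a_i$ are distinct, the polynomial on the right is squarefree of degree $5$, so the curve is hyperelliptic of genus $2$; by Conjecture \ref{uniform} it carries at most $BL_2$ rational points. Distinct common neighbours $b$ give distinct $x$-coordinates, and hence there are at most $BL_2$ of them. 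In other words, no five vertices of $A$ share more than $BL_2$ common neighbours in $B$, i.e. $G$ contains no copy of $K_{BL_2+1,\,5}$ with the $BL_2+1$ vertices in $B$ and the five vertices in $A$.

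With this excluded configuration in hand, I would apply Theorem \ref{KST_t} to $G$ with the roles of the two classes chosen so that the side of size five sits in $A$: taking $B$ as the first class and $A$ as the second, the absence of $K_{s,t}$ with $s=BL_2+1$ vertices in $B$ and $t=5$ vertices in $A$ bounds the number of edges by $O\big(|A|\,|B|^{1-1/5}+|B|\big)=O\big(|A|\,|B|^{4/5}+|B|\big)$, which is exactly the claimed estimate; the constants $s,t$ are absolute and are absorbed into the $O$-notation.

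The genuinely substantial step is the geometric one: recognizing that the common-neighbour constraint collapses into a single superelliptic curve of genus $2$, so that it falls under Conjecture \ref{uniform}. Once this uniform bound on common neighbours is secured, the passage to the stated estimate is a direct and standard application of K\H{o}v\'ari--S\'os--Tur\'an. The only points requiring care are that the five chosen $a_i$ must be genuinely distinct (guaranteed by $|A|\ge 5$, which also explains the numerical hypothesis) and that the exponent $4/5=1-1/5$ forces the five-element side of the forbidden graph to lie in $A$, hence dictates the orientation in which Theorem \ref{KST_t} is invoked.
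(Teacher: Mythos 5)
Your proposal is correct and follows essentially the same route as the paper: the same bipartite graph on $A$ and $B$, the same genus-$2$ curve $y^2=\prod_{i=1}^5(x+a_i)$ (via Conjecture \ref{uniform}) ruling out a $K_{5,BL_2+1}$, and the same appeal to Theorem \ref{KST_t}. Your write-up merely makes explicit two points the paper leaves implicit --- that distinct common neighbours give distinct rational points on the curve, and that the orientation of Theorem \ref{KST_t} must place the five-vertex side in $A$ to produce the exponent $4/5$ on $|B|$.
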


Indeed, let us consider the bipartite graph $G(A,B)$ where two vertices $a\in A$, $b\in B$ connect by an edge iff $a+b$ is a square. By Conjecture \ref{uniform} it contains no $K_{5,BL_2+1},$ so by Theorem  \ref{KST_t}  the number of edges -- and therefore the number of pairs adding to a square -- is $O(|A||B|^{1-1/5}+|B|).$

\section{Higher powers}\label{Cubes}

In \cite{BGP} the authors discuss the problem of determining the maximal number $Q_k(N)$ of $k$th powers in an arithmetic progression of length $N$.
Using a more deep structural result \cite[Theorem 6.1]{s_mixed} instead of Theorem \ref{t:struct_E2E4}, we obtain an analogue of Theorem  \ref{t:E_squares} for cubes. Now the curves we are working with are of the form $y^3 = (x^3+a)(x^3+b),$ which are genus 4 curves.

\begin{theorem}
  	Let $A$ be a set of cubes.
	Then  for any $\alpha < \frac{1}{11}$ one has 
\[
 	\E(A) \ll |A|^{5/2} \cdot \exp(-O(\log^{\alpha} |A|)) \,.
\]
    In particular, $Q_3 (N) \ll N^{2/3} \cdot \exp(-O(\log^{\alpha} |A|))$.  
\label{t:E_cubes}  
\end{theorem}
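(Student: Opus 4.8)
The plan is to mirror the proof of Theorem~\ref{t:E_squares} almost verbatim, replacing the square-specific inputs by their cubic analogues. First I would set $\E(A) = |A|^3/K$ and $\E_4(A) = M|A|^5/K^3$ as before. For cubes the analogue of Lemma~\ref{l:C4_C5} must be re-derived: for distinct nonzero shifts $\a,\beta$, a common element $a=x^3$ of $A\cap(A+\a)\cap(A+\beta)$ forces $y^3=(x^3+\a)(x^3+\beta)$, a genus~$4$ curve, so by Conjecture~\ref{uniform} the triple intersection is $\le BL_4$. This yields $\E_3(A)\ll |A|^3$ and hence $\E_4(A)\ll |A|^{7/2}$, i.e. $M\ll K^3|A|^{-3/2}$, which is exactly what forces the target exponent $5/2$ rather than $8/3$. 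If $M\gg \exp(C_0\log^\a|A|)$ we are immediately done, so assume $M$ is small.

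Next I would invoke the stated stronger structural theorem \cite[Theorem 6.1]{s_mixed} (in place of Theorem~\ref{t:struct_E2E4}) to extract $A'\subseteq A$ with $|A'|\gg |A|/M^C$ and controlled iterated sumset growth $|nA'-mA'|\ll M^{C(n+m)}K|A'|$. Then, setting $D'=A'-A'$, apply Lemma~\ref{l:mult_2A-2A} with $d=2$ and $l\sim(\log|A|/\log^2 M)^{1/3}$ to obtain $Z$ of size $|Z|\ge \exp(-C_1 l^3\log^2 M)|D'|$ with $[2^l]\cdot Z\subseteq 2D'-2D'$. The key incidence input must now be the cubic version of Lemma~\ref{l:incidences}: one counts solutions to $a=bc+d$ where both $A$ and $B$ are sets of cubes, arriving at an equation $y^3=\prod_{i}(x^3+(c_i-c_1))$ which is again a curve of genus $\ge 2$, so Conjecture~\ref{uniform} bounds the number of slopes $d/b$ by a constant $BL_g$. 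Taking $B$ to be the cubes in $[2^l]$ (so $|B|\gg 2^{l/3}$) and $D=3D'-2D'$ gives $|B||Z||A'|$ solutions to $d=a'+bz$.

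I would then run the covering step exactly as before: by \cite{Sanders_3log}, $D$ contains a proper GAP $H$ of size $|D|\exp(-C_*\log^\kappa M)$ with $\kappa>3$, and the covering lemma \cite[Exercise 1.1.8]{TV} gives $X$ with $|X|\ll\exp(C_*\log^\kappa M)\log|A|$ and $D\subseteq H+X$. Splitting $B\times(H+x)$ by the gcd of $b$ with the normalized elements of $H+x$, applying the cubic incidence bound fibrewise, and using \eqref{tmp:16.05_1}-style decay of the $|\mathcal{L}_l|$, each $\sigma(x)\ll |B||H|+|Z|(|B||H|)^{3/4}$. Summing over $X$ and applying Pl\"unnecke--Ruzsa \eqref{f:Pl-R} yields an inequality of the shape $2^{l/3}|A|\ll M^{C_2}|X|^4 K^3$, where the $2^{l/3}$ (rather than $2^{l/2}$) comes from $|B|\gg 2^{l/3}$ for cubes. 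Unwinding with the choice of $l$ and the smallness of $M$ under $\a<\tfrac1{11}$ gives $K\gg |A|^{2/5}\exp(O(\log^\a|A|))$, hence $\E(A)=|A|^3/K\ll |A|^{5/2}\exp(-O(\log^\a|A|))$, and the arithmetic-progression corollary $Q_3(N)\ll N^{2/3}\exp(-O(\log^\a N))$ follows from the standard Rudin-type passage from energy to intersection size.

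The main obstacle I anticipate is not the combinatorial skeleton, which transfers cleanly, but verifying that each algebraic curve appearing in the cubic setting genuinely has genus $\ge 2$ so that Conjecture~\ref{uniform} applies, and tracking how the exponent $5/2$ emerges: it hinges precisely on the weaker energy bound $\E_4(A)\ll |A|^{7/2}$ for cubes (versus $|A|^4$ for squares) together with the factor $2^{l/3}$ replacing $2^{l/2}$. I would double-check that the stronger structural result \cite[Theorem 6.1]{s_mixed} delivers a set $A'$ with the iterated-sumset control at the quality the cubic bookkeeping requires, since that is the one place where a purely numerical slippage could degrade the final exponent below $5/2$.
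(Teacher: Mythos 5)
Your proposal contains the right raw ingredients (the genus-$4$ curve $y^3=(x^3+\a)(x^3+\b)$, the bound $\E_3(A)\ll|A|^3$, cubes of $[2^l]$ with $|B|\gg 2^{l/3}$, the cubic incidence analogue), but the quantitative skeleton is broken at its pivot: the claim ``$\E_3(A)\ll|A|^3$ and hence $\E_4(A)\ll|A|^{7/2}$'' is false. For \emph{any} set of cubes one has $\E_4(A)\asymp|A|^4$: the upper bound is as in Lemma \ref{l:C4_C5}, while for the lower bound note that $\sum_{\a,\b,\g}|A\cap(A+\a)\cap(A+\b)\cap(A+\g)|=|A|^4$ exactly (each quadruple in $A^4$ determines the shifts), and by Conjecture \ref{uniform} each nondegenerate term is at most $BL_4$, so at least $\gg|A|^4/BL_4$ of the terms are nonempty and each contributes at least $1$ to $\E_4(A)$. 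Consequently your parametrization $\E_4(A)=M|A|^5/K^3$ forces $M\gg K^3/|A|$, exactly as for squares, and the $\E_4$-based bookkeeping can only reproduce the exponent $8/3$ of Theorem \ref{t:E_squares}; the replacement of $2^{l/2}$ by $2^{l/3}$ affects only constants inside the exponential factor, never the main power of $|A|$. Your endgame is also arithmetically inconsistent with the target: $K\gg|A|^{2/5}\exp(O(\log^{\a}|A|))$ yields only $\E(A)=|A|^3/K\ll|A|^{13/5}$, which is \emph{weaker} than the unconditional bound $\E(A)\ll|A|^{5/2}$ (from $\E(A)^2\le|A|^2\,\E_3(A)\ll|A|^5$); the whole content of the theorem is the extra factor $\exp(-O(\log^{\a}|A|))$ on top of $|A|^{5/2}$, so you must show $K\gg|A|^{1/2}\exp(O(\log^{\a}|A|))$.

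The paper's actual route avoids $\E_4$ entirely and drops to the third moment, which is precisely where cubes beat squares: for squares the curve $y^2=(x^2+\a)(x^2+\b)$ has genus $1$ and triple intersections are unbounded, whereas for cubes they are bounded, giving $\E_3(A)\ll|A|^3$. One then writes $\E_3(A)=M|A|^4/K^2$ (not $\E_4(A)=M|A|^5/K^3$), gets $K\gg|A|^{1/2}$ from Cauchy--Schwarz and hence $M\ll K^2/|A|$, and applies \cite[Theorem 6.1]{s_mixed} --- the point of citing that ``more deep structural result'' instead of Theorem \ref{t:struct_E2E4} is exactly that it extracts the structured subset $A'$ from $(\E,\E_3)$-data rather than $(\E,\E_4)$-data; you name Theorem 6.1 but feed it $\E_4$-parameters it is not designed for. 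With the $\E_3$-based value of $M$, rerunning the Theorem \ref{t:E_squares} machinery (which you otherwise transfer correctly: Lemma \ref{l:mult_2A-2A}, Sanders' theorem, the covering step, and the fibrewise cubic incidence bound) produces the needed $K\gg|A|^{1/2}\exp(O(\log^{\a}|A|))$ and thus $\E(A)\ll|A|^{5/2}\exp(-O(\log^{\a}|A|))$. As written, your proof would compile into a correct argument only for the exponent $8/3$, not $5/2$, so the gap is genuine and not merely ``numerical slippage.''
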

	Indeed, we show (sketch) that  if $\E(A) = |A|^3/K$, $\E_3 (A) = M|A|^4/K^2$, 
	then $M\gg \exp (C_0 \log^{\alpha} |A|)$.
	Here and below $C_j$ are absolute positive  constants. 
	We know by an analogue of Lemma \ref{l:C4_C5} for cubes that $\E_3 (A) \ll |A|^3$ as well as $K\gg |A|^{1/2}$. 
	Hence $M\ll K^2 |A|^{-1}$. 
	If $M\gg \exp (C_0 \log^{\alpha} |A|)$, then we are done. 
	Applying \cite[Theorem 6.1]{s_mixed}, we find a set $A'\subseteq A$, $|A'|\gg |A|/M^{C_1}$ such that for any positive integers $n,m$ one has 
	$|nA'-mA'| \ll M^{C_1 (n+m)} K|A'|$.
	After that repeat the arguments of the proof of Theorem \ref{t:E_squares}.

\bigskip

Clearly, we have an  analogue of Theorem  \ref{t:md_A} about intersections of cubes with generalized  arithmetic progressions.

Now let $k>3$. In this case by Conjecture \ref{uniform} the equation $y^k = x^k + a$ has a uniformly bounded number of the solutions. (For $k\geq 3$ the genus of the $y^k = x^k + a$ curve is $(k-1)(k-2)/2$ since it smooth if $a\neq 0$.) There are other related conjectures implying the uniform bound for the $k>3$ case. For example, the equation $x^5+y^5=u^5+v^5$ has no known nontrivial solution and it is expected that $x^5+y^5=n$ has $O(1)$ integer solutions for any $n\in \mathbb{N}$ . (Note that $x^3+y^3=n$ has an unbounded number of solutions \cite{CCC}.)

From there, $\E(A) \ll |A|^2$ and this is optimal. 
Nevertheless, it is possible to obtain a new upper bound for $Q_k (N)$ which breaks the square--root barrier.

\begin{theorem}
    Let $A$ be a set of $k$th powers, $k>3$ 
    and $H$ be a set, $|H+H|\le K|A|$. 
    Let $\alpha < \frac{1}{11}$ be any number and let 
\begin{equation}\label{cond:Qk}
    \log K \ll \log^{\alpha} |H| \,.
\end{equation}
    Then
\begin{equation}\label{f:Qk}
    |A \cap H| \ll \sqrt{|H|} \cdot \exp(-O(\log |H| /\log^2 K )^{1/3}) \,.
\end{equation}
    In particular, 
    $Q_k (N) \ll N^{1/2} \cdot \exp(- O(\log^{1/3} N))$. 
\label{t:Qk}
\end{theorem}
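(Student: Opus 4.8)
The plan is to transplant the proof of Theorem \ref{t:E_squares}, exploiting two features of the present setting: $H$ is already assumed to have small doubling, so no preliminary energy extraction (Theorem \ref{t:struct_E2E4}) is needed, and for $k>3$ the relevant curves have genus $\ge 2$ already at the two–fold level, which sharpens the incidence input. Write $A'=A\cap H$; we may assume $A'\subseteq H$, and the hypothesis $|H+H|\le K|A|$ serves as a doubling bound for $H$ (for the arithmetic–progression application $H$ is an AP, so its doubling constant is $K=O(1)$).

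First I would record a sharper analogue of Lemma \ref{l:incidences}. For $k>3$ and $c\neq0$ the curve $w_1^k-w_2^k=c$ is a smooth diagonal plane curve of genus $(k-1)(k-2)/2\ge3$, so Conjecture \ref{uniform} bounds its rational points by $BL_g$. Running the H\"older argument of Lemma \ref{l:incidences} with the \emph{square} in place of the fourth power, and with $B$ a set of $k$th powers, the off–diagonal contribution of two distinct $c_1,c_2$ becomes the count of $x$ with $c_1+x$ and $c_2+x$ simultaneously $k$th powers, i.e. the points on $w_1^k-w_2^k=c_1-c_2$; this is $O(1)$. After the $\gcd$–splitting of $\mathcal{L}$ into coprime pieces $\mathcal{L}_t$ as in Theorem \ref{t:md_A} (so that $\tau(\mathcal{L}_t)=1$ and $|\mathcal{L}_t|\sim|\mathcal{L}_1|/t^2$) one obtains
\[
	\#\{\,a=bc+d\,\} \ll |C|\,|\mathcal{L}|^{1/2}\log|\mathcal{L}| + |\mathcal{L}| \,,
\]
with exponent $1/2$ rather than $3/4$. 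The only new phenomenon is that $\sum_t|\mathcal{L}_t|^{1/2}\sim|\mathcal{L}_1|^{1/2}\sum_t t^{-1}$ is merely logarithmically convergent, costing a harmless factor $\log|\mathcal{L}|=O(l)$.

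Next I would run the machine of Theorem \ref{t:E_squares}. Apply Lemma \ref{l:mult_2A-2A} to $H$ with $d=2$ and $l\sim(\log|H|/\log^2K)^{1/3}$ to obtain $Z$ with $|Z|\ge\exp(-Cl^3\log^2K)|H|$ and $[2^l]\cdot Z\subseteq 2H-2H$. Let $B$ be the $k$th powers in $[2^l]$, so $|B|\gg2^{l/k}$ and $B\cdot Z\subseteq2H-2H$; then every triple in $A'\times B\times Z$ yields $a+bz\in3H-2H=:D$, so the number of solutions of $d=a+bz$ equals $|A'||B||Z|$. As in Theorem \ref{t:E_squares}, by \cite{Sanders_3log} the set $D$ contains a proper GAP $H^*$ of size $|D|\exp(-O(\log^\kappa K))$ (any $\kappa>3$), and the covering lemma \cite[Exercise 1.1.8]{TV} gives $D\subseteq H^*+X$ with $|X|\ll\exp(O(\log^\kappa K))\log|H|$; the GAP structure is exactly what validates the $\gcd$–splitting on each piece $H^*+x$. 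Applying the incidence bound above on each $H^*+x$ with $\mathcal{L}=B\times(H^*+x)$ and $C=-Z$ gives $\sigma(x)\ll|Z||\mathcal{L}|^{1/2}l+|\mathcal{L}|$; summing over $x$, using $\sum_x|\mathcal{L}|\asymp|B||D|$ and $|D|\ll K^{O(1)}|H|$ by Pl\"unnecke, choosing $l$ so the first term dominates, and cancelling the common factor $|Z|$ leaves
\[
	|A'| \ll \frac{|H|^{1/2}}{|B|^{1/2}}\cdot l\cdot(\text{subpolynomial in }K) \ll |H|^{1/2}\,2^{-l/(2k)}\cdot(\text{subpolynomial in }K) \,.
\]
Inserting $l\sim(\log|H|/\log^2K)^{1/3}$ and absorbing the subpolynomial factors into the exponential yields \eqref{f:Qk}; for an AP of length $N$ we have $|H|=N$ and $K=O(1)$, hence $Q_k(N)\ll N^{1/2}\exp(-O(\log^{1/3}N))$.

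The main obstacle is the bookkeeping that keeps the exponential gain alive: the saving $2^{-l/(2k)}$ must dominate the product of all subpolynomial losses, the dominant one being the covering number $|X|\le\exp(O(\log^\kappa K))$ with $\kappa>3$. Since condition \eqref{cond:Qk} permits $\log K$ as large as $\log^\alpha|H|$ while $l\gtrsim\log^{(1-2\alpha)/3}|H|$, this forces $\kappa\alpha<(1-2\alpha)/3$, which as $\kappa\to3^{+}$ is precisely $\alpha<\tfrac1{11}$. A secondary point to verify is that the absolute constant hidden in the choice of $l$ keeps $|Z|\gg(|B||H|)^{1/2}/l$, so that the first term of $\sigma(x)$ genuinely dominates and the cancellation of $|Z|$ is legitimate.
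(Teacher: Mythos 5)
Your proposal is correct and follows essentially the same route as the paper's proof: the same application of Lemma \ref{l:mult_2A-2A} with $l\sim(\log|H|/\log^2 K)^{1/3}$, the same exact count $|A||B||Z|$ of solutions to $d=a+bz$ with $B$ the $k$th powers in $[2^l]$ and $D=3H-2H$, the same second-moment/incidence step reducing via the $\gcd$-splitting to $O(1)$ rational points on $y^k=x^k+c$ (genus $(k-1)(k-2)/2\ge 3$) by Conjecture \ref{uniform}, and the same Sanders-plus-covering factor $X$ whose size is exactly what forces $\alpha<\tfrac{1}{11}$. Your only deviations --- packaging the Cauchy--Schwarz step as a standalone exponent-$\tfrac12$ incidence lemma (with the harmless $\log$ factor from $\sum_t t^{-1}$) and explicitly flagging that the constant in the choice of $l$ must keep $|Z|^2\gg |B||D|$ --- are points the paper handles inline, not a different argument.
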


\begin{proof}
  	Without loosing of the generality we can assume that $A\subseteq H$. 
  	Let $X = \exp(\log^{\kappa} K) \log^2 |A|$, where $\kappa > 3$ is any number. 
  	Using Lemma \ref{l:mult_2A-2A} with $d=2$, a parameter $l \sim  (\log |H| /\log^2 K )^{1/3}$ and 
  	$A=H$ we find a set $Z$, $|Z| \ge   \exp \left(-C_1 l^{3} \log^2 K \right) |H|$ with
$
	[2^l] \cdot Z \subseteq 2H-2H \,.
$
	Putting $B$ be $k$th powers of  $[2^l]$, $|B| \gg 2^{l/k}$, 
	$C=Z$ and $D=3H-2H$, we obtain exactly $\sigma:= |B||Z||A|$  solutions to the equation $d=a+bz$, where $a\in A$, $d\in D$, $b\in B$, $z\in Z$.
	On the other hand, applying the arguments  as in the proof of Lemma \ref{l:incidences} of Theorem \ref{t:E_squares}, we obtain in view of Sanders' Theorem \cite{Sanders_3log} that 
\[
    X^{-2} \sigma^2 \ll (|B||D|)^2 + |B||D| \sum_{z_1 \neq z_2 \in Z} \sum_{b,d} A(d) A(d+b(z_2-z_1)) 
    \ll 
    (|B||D|)^2 + |B||D||Z|^2 
\]
    because we arrive to the equation $y^k = x^k + z_2-z_1$ which has at most $BL_\ell$ solutions by Conjecture \ref{uniform}, where $\ell=(k-1)(k-2)/2. $
    In view of our choice of the parameter $l$ it gives us
\[
    X^{-2} (|B||Z||A|)^2 \ll (|B||D|)^2 + |B||D||Z|^2 
    \ll |B||D||Z|^2 \,.
\]
    By the Pl\"unnecke--Ruzsa inequality $|D|\le K^5 |H|$ and hence thanks to our condition \eqref{cond:Qk}, we derive 
\[
    |A| \ll \sqrt{|H|}\cdot X \exp(-O(\log |H| /\log^2 K )^{1/3})
    \ll
    \sqrt{|H|}\cdot \exp(-O(\log |H| /\log^2 K )^{1/3})
\]
as required. 
$\hfill\Box$
\end{proof} 

\section{Sum-Product along edges}

The sum-product problem was introduced by Erd\H{o}s and Szemer\'edi in \cite{ESz}. A particular variant of the problem which was raised there is the following:
\begin{problem}
Given two $n$-element sets of integers, $A=\{a_1,\ldots ,a_n\}$ and $B=\{b_1,\ldots ,b_n\} $ let us define sumset and product set as 
$$S=\{a_i+b_i |  1\leq i\leq n\}~~\text{     }
and~~\text{     }
 P=\{a_i\cdot b_i |  1\leq i\leq n\}.$$
Erd\H{o}s and Szemer\'edi conjectured that 
\begin{equation}\label{SZP}
|P|+|S|=\Omega(n^{1/2+c})
\end{equation}
for some constant $c>0.$ 
\end{problem} 
The problem was connected to the sum of squares problem of Rudin, as we discussed above, by Chang in \cite{MCC}.
Alon et al.  proved in  \cite{AA} that under the assumption of Conjecture \ref{uniform}, one can take $c=1/14$ in equation (\ref{SZP}), i.e. $|P|+|S|=\Omega(n^{4/7}).$

\medskip

We will show that one can take $c=1/10$ in equation (\ref{SZP}), even if we allow $A$ and $B$ to be multi-sets.   

\medskip

\begin{theorem}\label{matching}
Given a set of distinct pairs of integers, $M=\{(a_i,b_i) | 1\leq i\leq n\}.$ If $P$ and $S$ are defined as above, then $|P|+|S|=\Omega(n^{3/5}).$    
\end{theorem}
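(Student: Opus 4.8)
The plan is to translate the additive--multiplicative data into an incidence problem between $S$ and $P$ governed by a single Diophantine condition, and then to bound the resulting bipartite graph using the Uniformity Conjecture together with the K\H{o}v\'ari--S\'os--Tur\'an theorem. The starting observation is that if $s=a+b$ and $p=ab$, then $a,b$ are the two roots of $t^2-st+p$, so $s^2-4p=(a-b)^2$ is a perfect square. Thus each index $i$ produces a pair $(s_i,p_i)\in S\times P$ for which $s_i^2-4p_i$ is a square. Since the unordered pair $\{a,b\}$ is recovered from $(s,p)$, the map $(a_i,b_i)\mapsto(s_i,p_i)$ is at most $2$-to-$1$, so there are at least $n/2$ distinct such pairs.

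Next I would form the bipartite graph $G(S,P)$ joining $s\in S$ to $p\in P$ exactly when $s^2-4p$ is a perfect square; by the previous paragraph it has at least $n/2$ edges. The key structural input is that three vertices of $P$ cannot have too many common neighbours. Indeed, if $p_1,p_2,p_3\in P$ are distinct and nonzero, then any common neighbour $s$ gives an integer point $(s,y)$ on the curve $\prod_{j=1}^{3}(x^2-4p_j)=y^2$; this polynomial has degree six with distinct roots $\pm 2\sqrt{p_j}$, hence the curve has genus two, and Conjecture \ref{uniform} bounds the number of such $s$ by $BL_2$. Consequently $G(S,P)$ contains no $K_{3,BL_2+1}$ with the three vertices lying in $P$.

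Feeding this into the K\H{o}v\'ari--S\'os--Tur\'an estimate (Theorem \ref{KST_t}), with $P$ as the side from which the $3$-subsets are chosen, gives
\[
  \tfrac{n}{2}\ \le\ |E(G)|\ \ll\ |P|\,|S|^{2/3}+|S|\,.
\]
Since $|P|\,|S|^{2/3}\le(|S|+|P|)^{5/3}$, this immediately yields $|S|+|P|\gg n^{3/5}$, as claimed. I would carry out the counting underlying the K\H{o}v\'ari--S\'os--Tur\'an bound directly, via the convexity estimate $\sum_{s\in S}\binom{\deg s}{3}\le BL_2\binom{|P|}{3}$, so as to track the degree distribution cleanly.

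The part requiring the most care is the genus computation together with the degenerate cases, rather than the final optimisation. One must restrict to $p\neq 0$ (the pairs with $ab=0$ are handled separately: $k$ such distinct pairs already force at least $k/2$ distinct sums, so either they are few and can be discarded or $|S|$ is already of the right order) and ensure the three chosen products are pairwise distinct so that the sextic is squarefree and the curve genuinely has genus two. I also note that fixing points on the $S$-side instead leads, via the degree-five curve $\prod_{j=1}^{5}(s_j^2-4x)=y^2$, only to a forbidden $K_{5,BL_2+1}$ and the weaker exponent $5/9$; it is precisely the asymmetry of $s^2-4p$ in its two variables---quadratic in $s$ but linear in $p$---that makes the $p$-side the efficient one and produces the exponent $3/5$.
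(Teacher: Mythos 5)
Your proposal is correct and follows essentially the same route as the paper: the same bipartite graph between $S$ and $P$ with at least $n/2$ edges, the same forbidden $K_{3,BL_2+1}$ obtained from rational points on the genus-two curve $y^2=\prod_{j=1}^{3}(x^2-4p_j)$, the same K\H{o}v\'ari--S\'os--Tur\'an/cubic-energy count $\sum_{s\in S}\binom{\deg s}{3}\le BL_2\binom{|P|}{3}$, and the same final inequality $n^3\ll |P|^3|S|^2$. Your explicit handling of the degenerate case $p=0$ (where the sextic fails to be squarefree) is a small point the paper's proof passes over silently, but it does not alter the argument.
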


\begin{proof}Let us define a bipartite graph $G(P,S)$ where two vertices $p\in P$ and $s\in S$ are connected by an edge if there is an $i,  1\leq i\leq n,$ so that $p=a_i\cdot b_i$ and $s=a_i+b_i.$ The number of edges in $G(P,S)$ is at least $n/2,$ since $a_i$ and $b_i$ are determined by the two equations as solutions of a quadratic equation. 

In $G(P,S)$ there is no $K_{C,3},$ a complete bipartite graph between vertices $p_1,p_2,p_3\in P$ and $s_1,\ldots s_C\in S$ since the identity $(a_i+b_i)^2-4a_i\cdot b_i=(a_i-b_i)^2$ implies that these numbers would give $C$ solutions to the $y^2=(x^2-4p_1)(x^2-4p_2)(x^2-4p_3)$ equation, contradicting Conjecture \ref{uniform} if $C$ is large enough.

\medskip
It gives a bound on the cubic energy

$$
\sum_{s\in S} \mathrm{deg}_{G(P,S)}^3 (s)= O\left({|P|}^{3}\right) \,.
$$

So by the H\"older inequality and the inequality $\sum_{s\in S} \mathrm{deg}_{G(P,S)} \ge n/2$,  we have $n^3=O(|P|^3|S|^2).$ 
$\hfill\Box$
\end{proof}

\medskip

The construction in \cite[Theorem 3]{ARS} shows that there are examples for sets $M$ where $|P|+|S|=O(n^{4/5+\varepsilon}),$ so Theorem \ref{matching} can not be improved beyond an extra $1/5$ in the exponent. (The construction in \cite[Theorem 3]{ARS} describes a graph and sums and products along the edges of the graph, but simply separating the $e$ edges into a matching with $e$ pairs we get the desired example for $M.$)

\medskip

\section{Appendix}

In this section we obtain  a 
purely 
combinatorial result about the difference set for sets $A$ which have the sets of the form 	$A\cap (A+\a_1) \cap \dots \cap (A+\a_{k-1})$
to be uniformly bounded.
It shows that such sets either grow faster than the ordinary application of the H\"older gives us or they must have a rich additive structure.

\begin{proposition}
    Let $k\ge 3$ be a positive integer, $C>0$ be an absolute constant,  and let $\Gr$ be an abelian group such that any non--zero element of $\Gr$ has order at least $k$. 
	Also,  let 
	$A \subseteq \Gr$ be a set such that for any different non--zero $\a_1, \dots, \a_{k-1} \in \Gr$ one has 
	\begin{equation}\label{cond:C_comb}
		|A\cap (A+\a_1) \cap \dots \cap (A+\a_{k-1})| \le C \,.
	\end{equation}
	Then for $P \subseteq D:=A-A$, $P = \{s ~:~ r_{A-A} (s) \ge |A|^2/(4|D|) \}$ one has $|D| \gg |A|^{\frac{k}{k-1}}$ and 
	\[
	\sum_{x\in D} r_{P-P} (x) \gg_k |A|^{\frac{2k}{k-1}} \,.
	\]
	\label{p:C_comb}
\end{proposition}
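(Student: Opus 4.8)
Both conclusions will be driven by a single input, the energy estimate $\E_k(A)\ll_k|A|^k$. The plan is to extract it from \eqref{cond:C_comb} by splitting
\[
	\E_k(A)=\sum_{\a_1,\dots,\a_{k-1}}|A\cap(A+\a_1)\cap\dots\cap(A+\a_{k-1})|^2
\]
into the \emph{generic} part, where $\a_1,\dots,\a_{k-1}$ are distinct and non-zero, and the \emph{degenerate} rest. Each generic factor is $\le C$ by hypothesis, so that part is $\le C\sum_{\vec\a}|A\cap(A+\a_1)\cap\dots\cap(A+\a_{k-1})|=C|A|^k$; a degenerate tuple, whose shifts take $j\le k-2$ distinct non-zero values, contributes a term of a lower energy, and summing over the $O_k(1)$ coincidence patterns bounds the rest by $O_k(\sum_{j\le k-2}\E_{j+1}(A))$ with $\E_{j+1}(A)\le|A|^{j+2}\le|A|^{k}$. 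This is exactly the step where the hypothesis that every non-zero element of $\Gr$ has order at least $k$ is used, to rule out torsion-forced coincidences among the shifts. Granting $\E_k(A)\ll_k|A|^k$, the first claim is immediate: by H\"older $|A|^2=\sum_{x\in D}r_{A-A}(x)\le|D|^{\frac{k-1}{k}}\E_k(A)^{1/k}\ll_k|A|\,|D|^{\frac{k-1}{k}}$, whence $|D|\gg|A|^{\frac{k}{k-1}}$.

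For the second claim I first record that $P$ carries most of the difference mass: the light differences contribute at most $|D|\cdot|A|^2/(4|D|)=|A|^2/4$, so $\sum_{p\in P}r_{A-A}(p)\ge\tfrac34|A|^2$. The device I would use is, for each $a\in A$, the set $S_a:=\{p\in P:a-p\in A\}\subseteq P$. The decisive elementary observation is that if $p_1,p_2$ share a common neighbour, i.e. $p_1,p_2\in S_a$ for some $a$, then $p_1-p_2=(a-p_2)-(a-p_1)\in A-A=D$. Hence, writing $Q_0$ for the number of ordered pairs $(p_1,p_2)\in P^2$ with a common neighbour, one has $\sum_{x\in D}r_{P-P}(x)\ge Q_0$, and it suffices to prove $Q_0\gg_k|A|^{\frac{2k}{k-1}}$.

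To bound $Q_0$ I would pass through $(k-1)$-fold coincidences, where \eqref{cond:C_comb} genuinely bites. Since $\sum_{a\in A}|S_a|=\sum_{p\in P}r_{A-A}(p)\ge\tfrac34|A|^2$, the power-mean inequality gives $N_{k-1}:=\sum_{a\in A}|S_a|^{k-1}\gg|A|^k$. Considering the $(k-1)$-fold Cartesian powers $S_a^{k-1}\subseteq P^{k-1}$, their pairwise overlaps satisfy
\[
	\sum_{a,a'\in A}|S_a\cap S_{a'}|^{k-1}=\sum_{\vec p\in P^{k-1}}|A\cap(A+p_1)\cap\dots\cap(A+p_{k-1})|^2\le\E_k(A)\ll_k|A|^k,
\]
so the Cauchy--Schwarz lower bound for the size of a union yields $W:=\big|\bigcup_{a\in A}S_a^{k-1}\big|\ge N_{k-1}^2\big/\sum_{a,a'}|S_a\cap S_{a'}|^{k-1}\gg_k|A|^k$; that is, at least $\gg_k|A|^k$ tuples in $P^{k-1}$ have a common neighbour.

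Finally I would convert this $(k-1)$-tuple count into the pair count $Q_0$. If $(p_1,\dots,p_{k-1})$ has a common neighbour then so does every pair $(p_i,p_j)$, so $W$ is at most the number of homomorphic copies of the complete graph $K_{k-1}$ in the graph on $P$ whose edges are the pairs counted by $Q_0$; by the Kruskal--Katona clique bound this is $\ll_k Q_0^{(k-1)/2}$, giving $Q_0\gg_k W^{2/(k-1)}\gg_k|A|^{\frac{2k}{k-1}}$, which with $\sum_{x\in D}r_{P-P}(x)\ge Q_0$ finishes the proof. I expect this last conversion to be the main obstacle, for two reasons. First, it must be uniform in $k$, in particular for odd $k-1$, where one cannot simply partition the $k-1$ coordinates into pairs and must instead invoke the clique-counting inequality. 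Second, it is crucial that one controls the $(k-1)$-fold and not the pairwise intersections: the pairwise overlaps $\sum_{p_1,p_2}|A\cap(A+p_1)\cap(A+p_2)|^2$ are governed by $\E_3(A)$, which need not be $O(|A|^3)$ once $k>3$, so routing the estimate through $\E_k(A)$ is precisely what makes the argument go through for all $k$.
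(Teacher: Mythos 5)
Your proposal is correct in substance, but it takes a genuinely different route from the paper's. The paper controls the mixed \emph{third} moment $\E_{k,3}(A)=\sum_{\a_1,\dots,\a_{k-1}}|A\cap(A+\a_1)\cap\dots\cap(A+\a_{k-1})|^3\ll_k|A|^k$: after removing coincidence patterns one is left with a third moment of $(k-2)$-fold intersections, which is expanded in two extra variables $y,z$, and it is precisely there that the order hypothesis enters (if $y\neq 0$, the shift set $\{0,\a_1,\dots,\a_{k-2}\}$ cannot coincide with its translate by $y$ unless $(k-1)y=0$, impossible when every non-zero element has order at least $k$); a single H\"older application over the pairs $(s,t)\in P^2$ with $A\cap(A+s)\cap(A+t)\neq\emptyset$ --- such pairs satisfy $s-t\in D$, so their number is at most $\sum_{x\in D}r_{P-P}(x)$ --- then yields the conclusion, with the case $k=3$ delegated to \cite{SS1}. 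You work instead with the plain second moment $\E_k(A)\ll_k|A|^k$, which is indeed elementary; note, however, that your parenthetical misattributes the role of torsion: the degenerate tuples reduce to lower energies $\E_{j+1}(A)\le|A|^{j+2}\le|A|^k$ by pure bookkeeping, and in fact your argument never uses the order hypothesis anywhere, so it treats $k=3$ uniformly and (modulo the repair below) proves the statement without the torsion assumption --- the paper's torsion condition is an artifact of its third-moment route. Your conversion from $\E_k$ back to pairs, via the Cartesian powers $S_a^{k-1}$, the Cauchy--Schwarz lower bound for the union, and the Kruskal--Katona clique bound, is the genuinely new ingredient replacing the paper's one-line H\"older step. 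One repair is needed at the end: tuples in $\bigcup_a S_a^{k-1}$ with repeated coordinates are not homomorphic copies of $K_{k-1}$, so strictly $W$ exceeds the clique count; but a tuple with $2\le j\le k-2$ distinct coordinates is an ordered $j$-clique times an $O_k(1)$ pattern count, contributing $\ll_k m^{j/2}\le m^{(k-1)/2}$ (where $m\ge 1$ is the number of unordered common-neighbour pairs, so $Q_0\ge m$), while $j=1$ contributes at most $|P|\le|A|^2$; both are absorbed since $W\gg_k|A|^k$, so the conclusion $m\gg_k W^{2/(k-1)}$ survives. In sum, the paper's approach is shorter once $\E_{k,3}(A)\ll_k|A|^k$ is established but needs the torsion hypothesis and a separate reference for $k=3$; yours trades that for the clique-counting step and is correspondingly more general.
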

\begin{proof}
	We assume that $k\ge 4$ because for $k=3$ the result is known, see \cite{SS1}. 
	We have 
	\[
	\sum_{s,t} |A\cap (A+s) \cap (A+t)|^k = \E_{3,k} (A) = \E_{k,3} (A) = \sum_{\a_1, \dots, \a_{k-1}} |A\cap (A+\a_1) \cap \dots  \cap (A+\a_{k-1})|^3 
	\le
	\]
	\begin{equation}\label{tmp:04.05_1}
	\le
	C^2 |A|^k 
	+
	\binom{k}{2} \binom{k-1}{2} |A|^k	 
	+
	\binom{k}{2} \sum^*_{\a_1, \dots, \a_{k-2}} |A\cap (A+\a_1) \cap \dots  \cap (A+\a_{k-2})|^3 \,,
	\end{equation}
	where the sum above is taken over different non--zero shifts $\a_1, \dots, \a_{k-2}$. 
	Expanding this sum, we obtain
	\begin{equation}\label{tmp:21.05_1}
	\sum^*_{\a_1, \dots, \a_{k-2}}\, \sum_{y,z} \sum_x A(x) A(x-\a_1) \dots A(x-\a_{k-2}) A(x+y) A(x+y-\a_1) \dots A(x+y-\a_{k-2}) \times
	\end{equation} 
	\begin{equation}\label{tmp:21.05_2}
	\times 
	A(x+z) A(x+z-\a_1) \dots A(x+z-\a_{k-2}) \,.
	\end{equation} 
	By the assumption any non--zero element of $\Gr$   has order at least $k$. 
	Hence if $y$ or $z$ are non--zero, then the sum in \eqref{tmp:21.05_1}, \eqref{tmp:21.05_2} contains at least $k-1$ different shifts and thus the sum over $x$ is at most $C$ thanks to \eqref{cond:C_comb}.
	If not, then the sum 
	can be estimated as  
	$|A|^{k-1}$. 
	Thus 
	\[
	\E_{3,k} (A) \le C^2 |A|^k + \binom{k}{2} \binom{k-1}{2} |A|^k + \binom{k}{2} (2C |A|^k  + |A|^{k-1}) \ll_k |A|^k \,.	 
	\]
	Now if $A\cap (A+s) \cap (A+t) \neq \emptyset$, then $s,t, s-t \in D$.
	Hence by the H\"older inequality 
	\[
	|A|^{3k} \ll \left( \sum_{s,t\in P} |A\cap (A+s) \cap (A+t)| \right)^k \le \E_{3,k} (A) \left(\sum_{x\in D} r_{P-P} (x) \right)^{k-1}
	\]
	\[
	\ll_k
	|A|^k \left(\sum_{x\in D} r_{P-P} (x) \right)^{k-1}
	\]
	as required. 
	$\hfill\Box$
\end{proof}

\bigskip

{\bf Question.} Let $A$ be a set of squares. Is it possible to prove that the equation $x-y=z$, $x,y,z\in A-A$ has 
$|A-A|^{2-c}$ solutions?  
Here $c>0$ is an absolute constant.

\section{Acknowledgements} 
The research of the second author was supported in part by an NSERC Discovery grant and OTKA NK grant. The work of the second author was also supported by the European Research Council (ERC) under the European Union's Horizon 2020 research and innovation programme   (grant agreement No. 741420, 617747, 648017).

\end{document}